\theoremstyle{plain}\newtheorem{Theorem}{Theorem}[section]
\theoremstyle{plain}
\theoremstyle{plain}\newtheorem{Corollary}[Theorem]{Corollary}
\theoremstyle{plain}\newtheorem{Lemma}[Theorem]{Lemma}
\theoremstyle{plain}\newtheorem{Proposition}[Theorem]{Proposition}
\theoremstyle{plain}
\theoremstyle{definition}\newtheorem{Definition}[Theorem]{Definition}
\theoremstyle{definition}
\theoremstyle{definition}
\theoremstyle{definition}
\theoremstyle{definition}
\theoremstyle{definition}\newtheorem{Remark}[Theorem]{Remark}
\theoremstyle{definition}
\theoremstyle{definition}
                 \def\ten{\otimes}
\def\dim{\mathrm{dim}}
\def\Ext{\mathrm{Ext}}
\def\Hom{\mathrm{Hom}}
\def\ker{\mathrm{ker}}
\def\op{\mathrm{op}}
\def\lmd{\lambda}
\def\hx{\hat{x}}
\def\hy{\hat{y}}
\def\hg{\hat{g}}
\def\hh{\hat{h}}
\def\ha{\hat{a}}
\def\hG{\hat{G}}
\def\lmd{\lambda}
\def\hx{\hat{x}}
\def\hy{\hat{y}}
\def\hg{\hat{g}}
\def\hh{\hat{h}}
\def\ha{\hat{a}}
\def\hG{\hat{G}}
\def\kax{k_\alpha\hx}
\newtheorem*{theorem*}{Theorem}
\author{William Murphy} 
\date{\today}
\begin{document}

\title[The Nonvanishing first Hochschild cohomology of twisted simple group algebras]{The Nonvanishing first Hochschild cohomology of twisted finite simple group algebras}

\begin{abstract}
     Let $G$ be a finite simple group and $k$ be an algebraically closed field of prime characteristic dividing the order of $G$. We show that for all $2$-cocycles $\alpha\in Z^2(G;k^\times)$, the first Hochschild cohomology group of the twisted group algebra $HH^1(k_\alpha G)$ is nonzero.
\end{abstract}

\maketitle
\vspace{-7mm}
\section{Introduction}\label{sec1}

Throughout, let $k$ be an algebraically closed field of prime characteristic $p$. For a finite group $G$ of order divisible by $p$, the nonvanishing of the first Hochschild cohomology of the group algebra $kG$ is a consequence of a result of Fleischmann, Janiszczak and Lempken \cite{FJL}. There, it is shown that one can always find an element $x\in G$ of order divisible by $p$, whose $p$-part is not contained in $C_G(x)'$, the commutator subgroup of the centraliser of $x$ in $G$; developing the terminology used in \cite{FJL} we will call such an element $x$ a \textit{weak Non-Schur element}. Consequently, one obtains the nontriviality of $\Hom(C_G(x),k)$ for $x$ a weak Non-Schur element, and combining this with the centraliser decomposition for the Hochschild cohomology of group algebras (\cite[Theorem 2.11.2]{BensonII}), one sees that $HH^1(kG)$ is always nonzero. 

On the other hand, this is still an open problem for the family of \textit{twisted group algebras}, that is, the algebras obtained by ``twisting" the group algebra multiplication by some  $2$-cocycle. Formally, let $\alpha\in Z^2(G;k^\times)$ and denote by $k_\alpha G$ the $k$-algebra with a $k$-basis given by $X:=\{\hx \ | \ x\in G\}$ and multiplication given by $\hx\hy=\alpha(x,y)\widehat{xy}$ for all $\hx,\hy\in X$. Here, the product $\hx\hy$ is in $k_\alpha G$ whilst $\widehat{xy}$ is the image in $X$ of the product $xy$ in $G$.

In this note we determine the following result, similar to that of \cite{FJL}, which may be of independent interest.  

\begin{Theorem}\label{main}
Let $G$ be a finite simple group of order divisible by $p$. Then $G$ contains a weak Non-Schur element $x$ which satisfies $\alpha(g,x)=\alpha(x,g)$  for all $\alpha\in Z^2(G;k^\times)$ and all $g\in C_G(x)$.
\end{Theorem}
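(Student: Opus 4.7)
The plan is to strengthen the theorem by producing a $p$-element $x\in G$ with $x\notin C_G(x)'$, which is tautologically a weak Non-Schur element (since $x_p=x$) and, as I will verify, automatically satisfies the commutation property.

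First I reinterpret the commutation condition in terms of central extensions. Each $\alpha\in Z^2(G;k^\times)$ realises a central extension $\widehat{G}_\alpha$ of $G$ by $k^\times$ with set-theoretic section $g\mapsto\widehat{g}$ satisfying $\widehat{g}\widehat{x}=\alpha(g,x)\widehat{gx}$. For commuting $g,x\in G$ this gives $[\widehat{g},\widehat{x}]=\alpha(g,x)\alpha(x,g)^{-1}\in k^\times$ in $\widehat{G}_\alpha$. Since every central extension of $G$ by $k^\times$ arises in this way, the condition in the theorem is equivalent to: $[\widehat{g},\widehat{x}]=1$ in every central extension of $G$ by $k^\times$, for every $g\in C_G(x)$.

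Next I argue this vanishing is automatic when $x$ is a $p$-element. Writing $g=g_pg_{p'}$ in $C_G(x)$, the centrality of commutators in $\widehat{G}_\alpha$ yields the bimultiplicative identity $[\widehat{g},\widehat{x}]=[\widehat{g}_p,\widehat{x}][\widehat{g}_{p'},\widehat{x}]$. Taking $p^a$ to exceed the orders of $g_p$ and $x$, the element $\widehat{g}_p^{p^a}$ lies in the central kernel, so $[\widehat{g}_p,\widehat{x}]^{p^a}=[\widehat{g}_p^{p^a},\widehat{x}]=1$; hence $[\widehat{g}_p,\widehat{x}]\in k^\times$ is $p$-torsion, forcing it to equal $1$ because $k^\times$ is $p$-torsion-free in characteristic $p$. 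Likewise $[\widehat{g}_{p'},\widehat{x}]$ has order dividing both $|g_{p'}|$ (a $p'$-number) and $|x|$ (a $p$-power), hence dividing $\gcd(|g_{p'}|,|x|)=1$, and is trivial. Therefore $[\widehat{g},\widehat{x}]=1$, as required.

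It remains to exhibit, in every finite simple $G$ with $p\mid|G|$, a $p$-element $x$ with $x\notin C_G(x)'$. One cannot simply pass to the $p$-part of the FJL element: while \cite{FJL} yields a weak Non-Schur $y$ with $y_p\notin C_G(y)'$, the $p$-part $y_p$ may fail to satisfy $y_p\notin C_G(y_p)'$, because enlarging the centraliser from $C_G(y)$ to $C_G(y_p)$ can enlarge the commutator subgroup. I would therefore proceed via the classification of finite simple groups, choosing $x$ family by family: for alternating groups, an explicit $p$-power permutation; for groups of Lie type in the defining characteristic, a regular unipotent element (whose centraliser modulo $Z(G)$ is abelian); in cross characteristic, a $p$-power generator of a generic maximal torus whose Weyl group fixes it trivially; and for the $26$ sporadic groups, a direct verification from the ATLAS of finite groups. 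The hardest part will be this final case analysis, especially small-parameter exceptions such as $\mathrm{PSL}_2(7)$ at $p=2$, where the central involution of the Sylow $2$-subgroup lies in its own commutator subgroup and one must instead select an element of order $4$.
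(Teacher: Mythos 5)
Your reduction step is fine: the commutator calculation in a central extension showing that every $p$-element is $\alpha$-regular is correct (lifts differ by central elements, commutators of lifts of commuting elements land in $k^\times$, are bimultiplicative, and $k^\times$ has no $p$-torsion in characteristic $p$), and it is essentially Lemma \ref{alpha} of the paper, proved there via Schur--Zassenhaus instead. The gap is in the existence claim that carries all the weight: you assert that every finite simple group $G$ with $p\mid |G|$ contains a $p$-element $x$ with $x\notin C_G(x)'$, i.e.\ that every finite simple group satisfies $S(p)$. This is false, and it is precisely why the paper cannot argue the way you propose. Concretely, for $(G,p)=(Ru,3)$ the only $3$-elements have order $3$ and their centraliser is the perfect group $3\cdot A_6$, so every $3$-element lies in the derived subgroup of its centraliser; $(J_4,3)$ and $(Th,5)$ behave similarly, and these are exactly the exceptions recorded in Proposition \ref{S(p)} (following \cite{FJL}), with further cases such as $G_2(4)$ at $p=3$ where only $W(p)$ is available (Proposition \ref{G2}). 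Your own caveat --- that the $p$-part of the FJL element need not be a strong Non-Schur element because the centraliser grows --- is not a technical inconvenience to be engineered around by a cleverer choice of $p$-element: in these groups no such $p$-element exists, so the family-by-family search you outline must fail at these cases.

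Consequently the proof cannot be completed without a second mechanism for $\alpha$-regularity of elements that are not $p$-elements, which is what the paper supplies: for $J_4$ and $Th$ the Schur multiplier is trivial so there is nothing to check; for $Ru$, $G_2(3)$, $G_2(4)$ and $F_4(2)$ one uses weak Non-Schur elements whose centralisers are cyclic (or have all Sylow subgroups cyclic), so that the restriction of $\alpha$ to $C_G(x)$ is a coboundary (Lemma \ref{cyclic}); and for $E_6(q)$, ${}^2E_6(q^2)$, $E_7(q)$ one uses weak Non-Schur elements whose preimage centralisers in the covering group are abelian together with the primality of $|Z|$ (Proposition \ref{chgab}). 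A related weakness in your sketch is the cross-characteristic case: when $p$ divides the order of the Weyl group (the only hard situation, since otherwise Sylow $p$-subgroups are abelian and $S(p)$ holds by Lemma \ref{absp}), a generator of a generic maximal torus gives at best a weak Non-Schur element, and its $p$-part again has a larger centraliser --- exactly the obstruction above --- so this device does not produce the strong Non-Schur elements your strategy requires.
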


\noindent Our motivation for this result lies in one immediate consequence of this theorem, as we obtain the nonvanishing first Hochschild cohomology of twisted finite simple group algebras.

\begin{Corollary}\label{mainnonvanishing}
Let $G$ be a finite simple group of order divisible by $p$. Then for all $2$-cocycles $\alpha\in Z^2(G;k^\times)$, the first Hochschild cohomology group of the twisted group algebra $HH^1(k_\alpha G)\neq \{0\}$.
\end{Corollary}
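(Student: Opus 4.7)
The plan is to deduce Corollary \ref{mainnonvanishing} from Theorem \ref{main} together with the twisted analogue of the centraliser decomposition of Hochschild cohomology used in the Introduction. In the untwisted case one has $HH^*(kG) \cong \bigoplus_{[g]} H^*(C_G(g); k)$ by \cite[Theorem 2.11.2]{BensonII}, whose degree one part is $\bigoplus_{[g]} \Hom(C_G(g), k)$. The twisted analogue I will use is
\[
HH^n(k_\alpha G) \;\cong\; \bigoplus_{[g]} H^n\!\left(C_G(g);\; {}^{\alpha_g}k\right),
\]
where the sum runs over conjugacy classes of $G$ and ${}^{\alpha_g}k$ denotes the one-dimensional $kC_G(g)$-module given by the character $\alpha_g(h) := \alpha(h,g)\alpha(g,h)^{-1}$, which records the scalar by which $\hat g$ is conjugated by $\hat h$ inside $k_\alpha G$ (a short check shows $\alpha_g$ is a group homomorphism into $k^\times$ and is invariant under coboundary changes of $\alpha$).

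With this decomposition in hand, I invoke Theorem \ref{main} to produce a weak Non-Schur element $x \in G$ satisfying $\alpha(g,x) = \alpha(x,g)$ for every $g \in C_G(x)$. This is exactly the statement $\alpha_x \equiv 1$, so the $[x]$-summand above collapses to $\Hom(C_G(x), (k,+))$, and it suffices to exhibit a nonzero element here. By the weak Non-Schur property, the $p$-part $x_p$ is not contained in $C_G(x)'$, so it has nontrivial image in the finite abelian group $C_G(x)/C_G(x)'$; since $\overline{x_p}$ is a nontrivial $p$-element, I can project onto a cyclic $p$-summand of $C_G(x)/C_G(x)'$ on which its image is nonzero, send the generator of that summand to $1 \in k$ (well-defined as $\chr k = p$), and extend by zero on the remaining summands. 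This yields a nontrivial homomorphism $\varphi : C_G(x) \to (k,+)$ and hence a nonzero class in $HH^1(k_\alpha G)$.

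The main obstacle is recording the twisted centraliser decomposition cleanly. While the untwisted version of \cite[Theorem 2.11.2]{BensonII} is classical, the twisted version with coefficients in the character ${}^{\alpha_g}k$ may require a short separate verification. The most direct route is to identify $k_\alpha G \otimes_k (k_\alpha G)^{\op}$ with a twisted group algebra of $G \times G$, decompose $k_\alpha G$ as a module over it into orbit summands indexed by conjugacy classes, and apply Shapiro's lemma on each orbit; the character $\alpha_g$ emerges naturally from the $(G \times G)$-stabiliser acting on the line $k\hat g$. Once this input is isolated, the remainder of the argument is immediate from Theorem \ref{main}.
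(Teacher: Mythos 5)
Your proposal is correct and follows essentially the same route as the paper: the twisted centraliser decomposition you describe is exactly Proposition \ref{CD} (cited from \cite[Lemma 3.5]{WithSpoon}, with your character $\alpha_x$ being the paper's $\lambda_\alpha$, trivial precisely when $x$ is $\alpha$-regular), and your elementary construction of a nonzero homomorphism $C_G(x)\to (k,+)$ from the weak Non-Schur property is the content of Proposition \ref{nonvanishingkg}. The only difference is that you re-derive these two inputs rather than cite them, which does not change the argument.
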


Just as with the result of Fleischmann, Janiszczak and Lempken, the proof of Theorem \ref{main} uses the classification of finite simple groups. What is more it is interesting to note that no proof of the nonvanishing first Hochschild cohomology of group algebras is known at present, that does not use the classification. We are also unaware of a reduction to finite simple groups for the nonvanishing Hochschild cohomology of twisted group algebras.

There has been some recent independent interest in the first Hochschild cohomology of twisted group algebras from Todea \cite{Tod}. As with our work, Todea takes \cite{FJL} as a starting point and some similar methods are used, though the main results differ: there, it is shown that for $p$-solvable groups, their twisted group algebras have nonzero first Hochschild cohomology. Consequently, we are able to give a partial answer to a question of Todea (\cite[Question 1.3]{Tod}) in the affirmative, with a reduction to finite simple groups remaining the only barrier to a full affirmative answer.

In addition to generalising the case where $\alpha$ represents the trivial class in $H^2(G;k^\times)$, the nonvanishing of the first Hochschild cohomology of twisted group algebras is a problem of interest in its own right as may be seen when one observes that $k_\alpha G$ always arises as the direct sum of blocks of some central group extension of $G$ (see \cite[\S1]{Humph}). Thus showing that $HH^1(k_\alpha G)\neq\{0\}$ in general can also be seen as a stepping stone towards proving the nonvanishing of $HH^1(B)$ for a block $B$ of a finite group algebra with a nontrivial defect group.

The first Hochschild cohomology of a $k$-algebra $A$ admits a Lie algebra structure, which extends to a graded Lie algebra structure on $HH^*(A)$, though in this note we are only interested in degree one. Our motivation to study the more general problem of the nonvanishing of $HH^1(B)$ comes in part from a wider investigation into the links between the Lie algebra structure of $HH^1(B)$ and the $k$-algebra structure of $B$, examining how each influences the other. In particular, the Lie algebra structure is also expected to provide information useful to the Auslander-Reiten conjecture (see \cite{BenKesLinBV,BriRub,ChapSchSol,EisRad,LinRubI,LinRubII,MurMat,RubSchroSol} for more examples of this).

The proof of Theorem \ref{main} amounts to an examination of the weak Non-Schur elements given in the Fleischmann, Janiszczak and Lempken result \cite{FJL}, and determining that in each case the desired property holds. In Section \ref{sec2} we give auxiliary results and details of twisted group algebras and Hochschild cohomology in the first degree, and in Section \ref{sec3} we separate the main result into propositions, showcasing different methods to prove that it holds for specific families of finite simple groups. Finally, in Section \ref{sec4} we use \texttt{GAP} to calculate some dimensions of Hochschild cohomology groups of twisted finite simple group algebras.

\section{Auxiliary results}\label{sec2}

Throughout this section we fix $G$ to be an arbitrary finite group unless otherwise stated. For a $2$-cocycle $\alpha \in Z^2(G;k^\times)$ the associativity of multiplication in $k_\alpha G$ is a consequence of the $2$-cocycle identity. Recall that we have an isomorphism of abelian groups between $H^2(G;k^\times)$ and $M(G)_{p'}$, the component of the Schur multiplier of $G$ of order coprime to $p$. It is well known that there is a bijective correspondence between the cohomology classes of $2$-cocycles $[\alpha]\in H^2(G;k^\times)$ and central extensions $1\to k^\times\to H\to G\to 1$. Moreover, for such an $\alpha$ one can always reduce to a central extension of the form $1\to Z\to \hG\to G \to 1,$ for some finite subgroup $\hG$ of  $k^\times$ and cyclic $p'$-subgroup $Z$ of $k^\times$ (see for example \cite[Proposition 1.2.18]{LinBlockI}). With this notation, multiplication in $\hG$ is equivalent to multiplication between elements of the basis of $k_\alpha G$ indexed by $G$, and so we make no distinction between the two. In particular, for each $x\in G$, we write $\hx$ to mean both the image of $x$ in $k_\alpha G$ or a preimage of $x$ in $\hG$. Note that we may now restrict to the case where $\alpha$ takes values in $Z$, and these details are made explicit by the following fact (\cite[Theorem 1.1]{Humph}): with $|Z|=m$ we have an isomorphism of $k$-algebras \begin{equation}
\label{dirprod}
k\hG\cong \prod_{i=0}^{m-1} k_{\alpha^i}G.
\end{equation} 

Let $A$ be an associative $k$-algebra and let $M$ be an $(A,A)$-bimodule. Let $A^e$ denote the tensor product of $k$-algebras $A\ten_k A^{\op}$ and view $M$ as an $A^e$-module. Then for all $n\geq 0$ the $n$'th Hochschild cohomology group of $A$ with coefficients in $M$ is the $k$-module  $$HH^n(A;M)=\Ext^n_{A^e}(A;M),$$ and the $n$'th Hochschild cohomology group of $A$ is $HH^n(A;A)=HH^n(A)$.

It is straightforward to show that taking Hochschild cohomology of (twisted) group algebras distributes nicely with respect to $(\ref{dirprod})$, and we obtain an isomorphism of $k$-modules, \begin{equation}
\label{dirsum}
HH^1(k\hG)\cong \bigoplus_{i=0}^{m-1} HH^1(k_{\alpha^i}G).
\end{equation}\label{(2)} This will see regular applications in the proof of Proposition \ref{dimsprop}: in particular it will be used to calculate the dimensions of the degree one Hochschild cohomology of twisted simple group algebras for small $Z$, and one notes that we have \begin{equation}
\label{dimsum}
\dim_k(HH^1(k\hG))-\dim_k(HH^1(kG))=\sum_{i=1}^{m-1}\dim_k(HH^1(k_{\alpha^i}G)).\end{equation} What is more, it is easily seen that $k_{\alpha^{-1}}G\cong (k_\alpha G)^{\op}$ as $k$-algebras and $HH^n(A)\cong HH^n(A^\op)$ as Lie algebras for an arbitrary $k$-algebra $A$ and for all $n$, which can be used to simplify (\ref{dirsum}) and (\ref{dimsum}) in explicit cases, as this results in an equality of dimensions $\dim_k(HH^1(k_\alpha G))=\dim_k(HH^1(k_{\alpha^{-1}}G))$. Moreover, the \textit{centraliser decomposition for the Hochschild cohomology of group algebras} (\cite[Theorem 2.11.2]{BensonII}) allows for the easy computation of the dimensions on the left hand side of (\ref{dimsum}).

The centraliser decomposition has a generalisation to twisted group algebras, which we will refer to throughout.

\begin{Proposition}\cite[Lemma 3.5]{WithSpoon}\label{CD}
Let $G$ be a finite group and $[\alpha]\in H^2(G;k^\times)$. Then there is a canonical isomorphism of graded $k$-modules $$HH^*(k_\alpha G)\cong \bigoplus_x H^*(C_G(x);\kax),$$ where $x$ runs over a complete set of conjugacy class representatives of $G$, $\kax$ is the one-dimensional $k$-module spanned by the image $\hx$ of $x$ in $k_\alpha G$, and the action of $C_G(x)$ on $\kax$ is given by $g\cdot\hx=\alpha(g,x)\alpha(x,g)^{-1}\hx$.
\end{Proposition}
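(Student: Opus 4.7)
The plan is to adapt the classical centraliser decomposition of $HH^*(kG)$ to account for the twist $\alpha$. Writing $A = k_\alpha G$ and computing $(\hat{g_1}\otimes\hat{g_2}^{\op})(\hat{h_1}\otimes\hat{h_2}^{\op}) = \alpha(g_1,h_1)\alpha(h_2,g_2)(\widehat{g_1 h_1}\otimes\widehat{h_2 g_2}^{\op})$ yields an isomorphism of $k$-algebras $A\otimes_k A^{\op} \cong k_\beta(G\times G^{\op})$ for the cocycle $\beta((g_1,g_2),(h_1,h_2)) = \alpha(g_1,h_1)\alpha(h_2,g_2)$. Under this identification $A$ is a module over $k_\beta(G\times G^{\op})$ whose basis $\{\hat x : x\in G\}$ is permuted up to scalar by the underlying $G\times G^{\op}$-action $(g,h)\cdot x = gxh$. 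This action is transitive with stabiliser the anti-diagonal subgroup $H = \{(g,g^{-1}) : g\in G\}$, a subgroup of $G\times G^{\op}$ isomorphic to $G$. The computation $(\hat g\otimes\hat{g^{-1}}^{\op})\cdot\hat 1 = \hat g\hat{g^{-1}} = \alpha(g,g^{-1})\hat 1$ exhibits $A$ as the twisted induced module $\Ind_H^{G\times G^{\op}}(k_\mu)$, where $k_\mu$ is the one-dimensional $k_{\beta|_H}H$-module corresponding to the character $(g,g^{-1})\mapsto\alpha(g,g^{-1})$.

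Applying the twisted Eckmann--Shapiro adjunction one obtains
\[
HH^*(k_\alpha G) = \Ext^*_{k_\beta(G\times G^{\op})}(A, A) \cong \Ext^*_{k_{\beta|_H}H}\bigl(k_\mu,\Res_H A\bigr).
\]
The restriction of $A$ to $H$ carries the twisted conjugation action $(g,g^{-1})\cdot\hat x = \hat g\hat x\hat{g^{-1}} = \alpha(g,x)\alpha(gx,g^{-1})\widehat{gxg^{-1}}$, which clearly preserves each conjugacy-class subspace $\bigoplus_{y\in[x]}k\hat y$. For each conjugacy class $[x]$ the stabiliser of $\hat x$ in $H$ is $C_G(x)$, so the summand is an induced module $\Ind_{C_G(x)}^G(k_{x,\alpha})$ for a one-dimensional character.

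Applying Shapiro's lemma to each summand and invoking the cocycle identity $\alpha(x,g)\alpha(xg,g^{-1}) = \alpha(x,1)\alpha(g,g^{-1}) = \alpha(g,g^{-1})$ with the normalisation $\alpha(\cdot,1)=1$ (so $\alpha(gx,g^{-1}) = \alpha(g,g^{-1})\alpha(x,g)^{-1}$ whenever $gx=xg$), the character of $C_G(x)$ on $\kax$ after absorbing the $\mu$-twist into $\beta|_H$ becomes $g\mapsto \alpha(g,x)\alpha(gx,g^{-1})/\alpha(g,g^{-1}) = \alpha(g,x)\alpha(x,g)^{-1}$, matching the statement. Summing over conjugacy classes gives the desired decomposition. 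The main technical hurdle is the careful bookkeeping of the twisting cocycle $\beta|_H$ on the anti-diagonal and its cancellation with $\mu$: although $\beta|_H$ need not be a coboundary a priori, the existence of the one-dimensional module $k_\mu$ forces it to be so, and this cancellation is precisely what converts the expression $\alpha(g,x)\alpha(gx,g^{-1})/\alpha(g,g^{-1})$ into the clean character $\alpha(g,x)\alpha(x,g)^{-1}$.
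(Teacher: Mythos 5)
Your proof is correct: the identification $k_\alpha G\ten_k(k_\alpha G)^{\op}\cong k_\beta(G\times G^{\op})$, the realisation of $k_\alpha G$ as the module induced from the anti-diagonal copy of $G$ (with $\beta$ restricted there trivialised explicitly by $\mu(g)=\alpha(g,g^{-1})$, so your one-dimensional module $k_\mu=k\hat{1}$ genuinely exists), the Eckmann--Shapiro reduction (valid since $k_\beta(G\times G^{\op})$ is free over the twisted subalgebra on the anti-diagonal), the conjugacy-class decomposition of the restriction, and the cocycle identity $\alpha(gx,g^{-1})=\alpha(g,g^{-1})\alpha(x,g)^{-1}$ for $g\in C_G(x)$ all check out and yield exactly the stated character $g\mapsto\alpha(g,x)\alpha(x,g)^{-1}$ on $\kax$ (which, usefully, is unchanged by modifying $\alpha$ by a coboundary, so your normalisation is harmless). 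The paper gives no proof of this proposition, citing Witherspoon's Lemma 3.5 instead, and your argument is essentially the standard one from that reference, so you have simply supplied the details the paper delegates to the citation.
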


Note that setting $[\alpha]=[1]\in H^2(G;k^\times)$ above recovers the standard ``untwisted" centraliser decomposition. We remark that the action of $C_G(x)$ on $k_\alpha\hx$ is the ``twisted conjugation" action: $\hg\hx=\alpha(g,x)\widehat{gx}=\alpha(g,x)\widehat{xg}=\alpha(g,x)\alpha(x,g)^{-1}\hx\hg$, whence $g\cdot\hx=\hg\hx\hg^{-1}$ for all $g\in C_G(x)$. We will denote by $\lmd_\alpha:C_G(x)\to k^\times$, the homomorphism inducing this action defined by $g\mapsto\alpha(g,x)\alpha(x,g)^{-1}$ for all $g\in C_G(x)$. \begin{Definition} Following well known terminology \cite{Humph,Kar}, we say that an element $x\in G$ is $\alpha$-\textit{regular} if $\alpha(g,x)=\alpha(x,g)$ for all $g\in C_G(x)$.
\end{Definition}
Note that this occurs precisely when $C_G(x)=\ker(\lmd_\alpha)$, or equivalently $H^1(C_G(x);k_\alpha\hx)\cong \Hom(C_G(x),k)$, which will allow us to prove Corollary \ref{mainnonvanishing}. That is, showing the nonvanishing Hochschild cohomology of twisted finite simple group algebras will follow from determining the existence an $\alpha$-regular, weak Non-Schur element $x$ in each finite simple group; the existence of such a weak Non-Schur element $x$ is the main result of \cite{FJL}, whence it remains for us to show that these elements are $\alpha$-regular.

For the remainder of this section we fix the following: $x$ is an element of $G$, and $\alpha\in Z^2(G;k^\times)$ is such  that the cohomology class $[\alpha]$ corresponds to a central extension $1\to Z\to \hG\to G\to 1$. We also denote by $C_{\hG}(x)\leq \hG$ the preimage of $C_G(x)$ under the surjection $\hG\to G$. Note that two preimages $\hg,\hh\in \hG$ of an element $g\in G$ will differ by an element of $Z$, that $C_{\hG}(\hx)$ is normal in $C_{\hG}(x)$, and that we have a well-defined action of $C_{\hG}(x)$ on $\kax$ given by $\hg\cdot \hx=\lmd_\alpha(g)\hx$ for all $\hg\in C_{\hG}(x)$, which is trivial on restriction to $C_{\hG}(\hx)$. Let $N=\ker(\lmd_\alpha)=\{g\in C_G(x) \mid \alpha(g,x)=\alpha(x,g)\}$, and notice that $C_{\hG}(\hx)=\{\hg\in \hG \mid \alpha(g,x)=\alpha(x,g)\}$.

\begin{Proposition}\label{aregprops}
With the notation above, we have the following.

\begin{itemize}
    \item [(i)] There are isomorphisms of groups $C_{\hG}(\hx)/Z\cong N$, and $C_{\hG}(x)/Z\cong C_G(x)$.
    
    \item [(ii)] There is an equality of subgroups $C_{\hG}(\hx)=C_{\hG}(x)$ if and only if $x$ is $\alpha$-regular.
    
    \item [(iii)] The group $H:=C_{\hG}(x)/C_{\hG}(\hx)$ is isomorphic to a cyclic $p'$-subgroup of $k^\times$.
    
    \item [(iv)] There are isomorphisms of $k$-modules $$H^1(C_G(x);\kax)\cong H^1(C_{\hG}(x);\kax)\cong H^1(C_{\hG}(\hx);\kax)^H\cong H^1(N;\kax)^H,$$ where the final two terms are the fixed points under the action of $H$ induced by $C_{\hG}(x)$ on $\kax$.
\end{itemize}
\end{Proposition}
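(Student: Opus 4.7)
\begin{paragr}\textbf{Proof plan.}
For (i), the second isomorphism is tautological: $C_{\hG}(x)$ is defined as the preimage of $C_G(x)$ under $\hG\to G$, whose kernel is $Z$. For the first, I would compute directly from the multiplication in $\hG$. For any $\hg\in\hG$ lifting $g\in G$ we have $\hg\hx=\alpha(g,x)\widehat{gx}$ and $\hx\hg=\alpha(x,g)\widehat{xg}$, and these agree in $\hG$ precisely when both $gx=xg$ in $G$ and $\alpha(g,x)=\alpha(x,g)$ in $Z$. Thus $C_{\hG}(\hx)$ is exactly the preimage of $N$ in $\hG$, and again the kernel of $\hG\to G$ restricted to this preimage is $Z$. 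Part (ii) is then immediate: since $C_{\hG}(\hx)\subseteq C_{\hG}(x)$ and both contain $Z$, equality is equivalent to $N=C_G(x)$, which is the definition of $\alpha$-regularity.
\end{paragr}

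\begin{paragr}
For (iii), by (i) there is an induced isomorphism $H=C_{\hG}(x)/C_{\hG}(\hx)\cong C_G(x)/N$. Since $N=\ker(\lmd_\alpha)$, the homomorphism $\lmd_\alpha$ descends to an injection $C_G(x)/N\hookrightarrow k^\times$. Because $\alpha$ can be taken to have values in the cyclic $p'$-subgroup $Z$ of $k^\times$, the map $\lmd_\alpha(g)=\alpha(g,x)\alpha(x,g)^{-1}$ lands in $Z$, so $H$ is isomorphic to a subgroup of the cyclic $p'$-group $Z$, and is therefore itself cyclic of order coprime to $p$.
\end{paragr}

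\begin{paragr}
Part (iv) is the main content, and the key tool is the Lyndon--Hochschild--Serre spectral sequence together with the fact that $Z$ and $H$ are $p'$-groups, so their higher cohomology with coefficients in any $k$-module vanishes. For the first isomorphism, $Z$ is a central subgroup of $C_{\hG}(x)$ of order invertible in $k$, and it acts trivially on $\kax$ (normalising $\alpha$ so that $\alpha(1,x)=\alpha(x,1)=1$, the element of $Z$ lifting $1\in G$ acts by $\lmd_\alpha(1)=1$). The $E_2$-page of LHSS for $1\to Z\to C_{\hG}(x)\to C_G(x)\to 1$ therefore collapses to the bottom row, giving $H^1(C_{\hG}(x);\kax)\cong H^1(C_G(x);\kax)$. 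The same argument applied to $1\to Z\to C_{\hG}(\hx)\to N\to 1$, where by the parenthetical remark in the paragraph preceding the proposition the action of $C_{\hG}(\hx)$ on $\kax$ is trivial on $Z$, yields $H^1(C_{\hG}(\hx);\kax)\cong H^1(N;\kax)$, and this isomorphism is $H$-equivariant because it is natural with respect to the conjugation action of $C_{\hG}(x)$. Finally, for the middle isomorphism, I would apply LHSS to $1\to C_{\hG}(\hx)\to C_{\hG}(x)\to H\to 1$. Since $H$ is a $p'$-group by (iii), $H^p(H;-)=0$ for $p>0$, and so the spectral sequence degenerates to give $H^q(C_{\hG}(x);\kax)\cong H^q(C_{\hG}(\hx);\kax)^H$ for all $q$, which in degree one is what is required.
\end{paragr}

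\begin{paragr}
The only mildly subtle point I anticipate is checking that the various identifications of modules and actions are compatible: in particular, verifying that the action of $C_{\hG}(x)$ on $\kax$ restricts trivially to $C_{\hG}(\hx)$ so that the outer $H$ in the chain of isomorphisms really is acting on the fixed points in both of the last two terms, and that the LHSS maps are $H$-equivariant. Granted this bookkeeping, each isomorphism follows from a standard $p'$-spectral sequence argument.
\end{paragr}
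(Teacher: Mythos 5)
Your proposal is correct and takes essentially the same route as the paper: (i)--(iii) by direct computation and the isomorphism theorems, and (iv) by three applications of the vanishing of cohomology of the $p'$-groups $Z$ and $H$ with coefficients in $k$-modules, via the Lyndon--Hochschild--Serre machinery (the paper phrases this through the associated five-term exact sequences rather than full spectral-sequence collapse). The only cosmetic difference is that you extract the last isomorphism from $1\to Z\to C_{\hG}(\hx)\to N\to 1$ together with an $H$-equivariance check of inflation, whereas the paper applies the same argument directly to $1\to N\to C_G(x)\to H\to 1$; both are valid.
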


\begin{Remark}
The action of $H$ on $H^1(C_{\hG}(\hx);\kax)$ is induced by the action of $C_{\hG}(x)$ on $\kax$: ${^{\hg}}f:\ha\mapsto \lmd_\alpha(g)f(\hg^{-1}\ha\hg)$ for all $\hg\in C_{\hG}(x)$, $\ha\in C_{\hG}(\hx)$ and $f\in H^1(C_{\hG}(\hx);\kax)$, with the action on $H^1(N;\kax)$ defined similarly. Note that $\Hom(C_{\hG}(\hx),k)\cong H^1(C_{\hG}(\hx);\kax)\cong H^1(N;\kax)\cong \Hom(N,k)$, though we keep the notation as in the statement of the proposition because in each case $H$ acts possibly non-trivially. 
\end{Remark}

\noindent For the remainder of this section we fix $H$ as defined in the statement of the proposition.

\begin{proof}
On restriction to $C_{\hG}(\hx)$ and $C_{\hG}(x)$, the surjection $\hG\to G$ with kernel $Z$ gives, respectively, the desired isomorphisms in (i), from which (ii) is immediate (alternatively note that the action of $C_{\hG}(\hx)$ on $k_\alpha\hx$ is trivial). The third isomorphism theorem proves (iii), noting that $H\cong C_G(x)/N$ injects into $k^\times$. 

To ease notation, for the remainder of the proof we will write $k\hx=\kax$. To prove (v), we use the fundamental exact sequence \cite[Corollary 7.2.3]{Evens} induced by the short exact sequence $1\to C_{\hG}(\hx)\to C_{\hG}(x)\to H\to 1$ to see that \begin{equation}\label{FES} 0\to H^1(H;(k\hx)^{C_{\hG}(\hx)})\to H^1(C_{\hG}(x);k\hx)\to H^1(C_{\hG}(\hx);k\hx)^{H}\to H^2(H;(k\hx)^{C_{\hG}(\hx)})\to\cdots \ .\end{equation} As $H$ is cyclic of order coprime to the exponent of $k\hx$ (viewed as an additive group) and $C_{\hG}(\hx)$ acts trivially on $k\hx$, one sees that the second and fifth terms in (\ref{FES}) vanish, showing that $H^1(C_{\hG}(x);k\hx)\cong H^1(C_{\hG}(\hx);k\hx)^{H}$. The fundamental exact sequence induced by $1\to N\to C_G(x)\to H\to 1$, that is, on replacing the groups $C_{\hG}(\hx)$ with $N$ and $C_{\hG}(x)$ with $C_G(x)$ in (\ref{FES}), shows that $H^1(C_{G}(x);k\hx)\cong H^1(N;k\hx)^{H}$. We make one further use of the fundamental exact sequence, this time induced by the short exact sequence $1\to Z\to C_{\hG}(x)\to C_G(x)\to 1$: $$0\to H^1(C_G(x);(k\hx)^Z)\to H^1(C_{\hG}(x);k\hx)\to H^1(Z;k\hx)^{C_G(x)}\to \cdots \ , $$ which, since $Z$ is of $p'$-order and acts trivially on $k\hx$, shows that $H^1(C_G(x);k\hx)\cong H^1(C_{\hG}(x);k\hx)$, completing the proof.
\end{proof}

\begin{Lemma}\label{alpha}
Suppose that $x$ has order coprime to $|Z|$. Then $x$ is $\alpha$-regular, and the action of $C_G(x)$ on $\kax$ in Proposition \ref{CD} is trivial. In particular, any $p$-element is $\alpha$-regular.
\end{Lemma}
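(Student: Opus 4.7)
The plan is to work inside the central extension $\hG$ rather than with the cocycle directly, using the identification between the basis multiplication of $k_\alpha G$ and the group operation of $\hG$. The key observation is that the function $\lmd_\alpha : C_G(x) \to k^\times$ in fact takes values in $Z$: for any preimages $\hg\in \hG$ of $g\in C_G(x)$ and $\hx\in\hG$ of $x$, the conjugate $\hg\hx\hg^{-1}$ is again a preimage of $x$ (since $\hg\mapsto g$ centralises $x$), so it differs from $\hx$ by an element of $Z$, and that element is precisely $\lmd_\alpha(g)$.

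Next, I would make a judicious choice of preimage $\hx$. Starting from any preimage, $\hx^{|x|}$ lies in $Z$; since $\gcd(|x|,|Z|)=1$ and $Z$ is a cyclic (in fact, abelian) $p'$-group, raising to the $|x|$-th power is an automorphism of $Z$, so there is some $z\in Z$ with $z^{|x|}=\hx^{-|x|}$. Replacing $\hx$ by $\hx z$ (still a preimage of $x$, as $Z$ is central) yields a preimage with $\hx^{|x|}=1$. Working with this choice, for any $g\in C_G(x)$ and any preimage $\hg\in\hG$ one computes
\[
1 \;=\; \hg\,\hx^{|x|}\,\hg^{-1} \;=\; (\hg\hx\hg^{-1})^{|x|} \;=\; \bigl(\lmd_\alpha(g)\hx\bigr)^{|x|} \;=\; \lmd_\alpha(g)^{|x|}\hx^{|x|} \;=\; \lmd_\alpha(g)^{|x|},
\]
using centrality of $Z$ in $\hG$ to pull $\lmd_\alpha(g)$ out of the power. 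Hence $\lmd_\alpha(g)^{|x|}=1$ in $Z$.

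Since $\lmd_\alpha(g)\in Z$ and $|Z|$ is coprime to $|x|$, this forces $\lmd_\alpha(g)=1$. That is, $\alpha(g,x)=\alpha(x,g)$ for every $g\in C_G(x)$, so $x$ is $\alpha$-regular. The triviality of the action of $C_G(x)$ on $\kax$ described in Proposition \ref{CD} is then immediate from the formula $g\cdot\hx=\lmd_\alpha(g)\hx$. Finally, if $x$ is a $p$-element then $|x|$ is a power of $p$, while $|Z|$ is coprime to $p$, so $\gcd(|x|,|Z|)=1$ and the hypothesis is met.

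The only real subtlety, and the one step worth being careful about, is the adjustment of the preimage so that $\hx^{|x|}=1$: one must use both that $|x|$ is invertible modulo $|Z|$ and that $Z$ is central (so that modifying $\hx$ by an element of $Z$ does not change the value of $\hx^{|x|}$ in any unwanted way, and so that the binomial expansion of $(\hg\hx\hg^{-1})^{|x|}$ is trivial). Everything else is essentially a one-line manipulation once this choice has been made.
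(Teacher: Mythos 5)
Your proof is correct and takes essentially the same route as the paper: both pass to the central extension $\hG$, arrange a lift $\hx$ of $x$ of the same order as $x$ (you do this by hand using that the $|x|$-power map is an automorphism of $Z$, where the paper invokes Schur--Zassenhaus on the abelian group $\langle \hx, Z\rangle$), and then use coprimality of $|x|$ and $|Z|$ to force the conjugation scalar $\lmd_\alpha(g)\in Z$ to be trivial. The only cosmetic difference is that you raise to the $|x|$-th power and conclude directly from $\lmd_\alpha(g)^{|x|}=1$, whereas the paper raises to the $|Z|$-th power and argues via $C_{\hG}(\hx^{|Z|})=C_{\hG}(\hx)$; both are valid.
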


\begin{proof}
The group $Z$ is isomorphic to a cyclic subgroup of $k^\times$ of order $m$, say, and suppose that the order of $x$ is $r$. Note that $m$ is coprime to $p$. Consider the subgroup of $\hG$, $S:=\langle \hx, Z\rangle$. Then $S$ is an extension of $Z$ by a group of coprime index, $1\to Z\to S\to \langle x\rangle\to 1$. By the Schur-Zassenhaus Lemma this extension splits, $S\cong\langle x\rangle \rtimes Z$ and in fact equals the trivial extension, since $S$ is abelian. Whence we may choose  a lift $\hx\in \hG$ of the same order as $x$.

Now let $g\in C_G(x)$, so that $\hg\hx\hg^{-1}=z\hx$ for some $z\in Z$. With $m=|Z|$ one sees that $\hg\hx^m\hg^{-1}=\hx^m$, and $\hg\in C_{\hG}(\hx^m)$. On the other hand, $m$ is coprime to the order of $x$, so that $\langle \hx^m \rangle = \langle \hx \rangle$. Since $\hg\in C_{\hG}(\hx^m)$, $\hg$ commutes with all $y\in \langle \hx^m\rangle=\langle \hx \rangle$,  whence $C_{\hG}(\hx^m)=C_{\hG}(\hx)$. We have that $\hg\hx=\hx\hg$ and the result follows.
\end{proof}

\begin{Lemma}\label{cyclic}
Let $x$ be such that $C_G(x)$ is at least one of the following: a cyclic group, a $p$-group or a group whose Sylow subgroups are all cyclic. Then $x$ is $\alpha$-regular, and in particular the action of $C_G(x)$ on $\kax$ in Proposition \ref{CD} is trivial.
\end{Lemma}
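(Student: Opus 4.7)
The plan is to show, in each of the three cases, that the homomorphism $\lmd_\alpha\colon C_G(x)\to k^\times$, $g\mapsto \alpha(g,x)\alpha(x,g)^{-1}$, is trivial; by definition this is $\alpha$-regularity of $x$, and by the remarks preceding Proposition~\ref{aregprops} it is equivalent to triviality of the action of $C_G(x)$ on $\kax$.

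The first two cases are settled by direct observations. When $C_G(x)$ is a $p$-group, Proposition~\ref{aregprops}(i) and (iii) together force the image of $\lmd_\alpha$ (which is $C_G(x)/N\cong H$) to be a $p'$-group; since it is simultaneously a quotient of the $p$-group $C_G(x)$, it must be trivial. When $C_G(x)=\langle c\rangle$ is cyclic, write $x=c^j$ and choose any lift $\hat{c}\in\hG$ of $c$. Then $\hat{c}^j$ is a lift of $c^j=x$, so $\hat{c}^j=z\hx$ for some $z\in Z\subseteq Z(\hG)$. Since $\hat{c}$ commutes with both $\hat{c}^j$ and the central element $z$, it commutes with $\hx$; hence $c\in N$, giving $N=C_G(x)$.

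The substantive case is that in which every Sylow subgroup of $C_G(x)$ is cyclic. Here the plan is to invoke the classical fact (see e.g.\ \cite{Kar}) that for each prime $q$ the $q$-primary part of the Schur multiplier of a finite group embeds into that of a Sylow $q$-subgroup, together with the vanishing of the Schur multiplier of a cyclic group; these combine to give $M(C_G(x))=0$, and hence $H^2(C_G(x);k^\times)=0$. It follows that the restriction of $\alpha$ to $C_G(x)\times C_G(x)$ is a coboundary $\partial c$ for some $c\colon C_G(x)\to k^\times$, and then for $g\in C_G(x)$ the identity $gx=xg$ yields
$$\alpha(g,x)=c(g)c(x)c(gx)^{-1}=c(x)c(g)c(xg)^{-1}=\alpha(x,g),$$
establishing $\alpha$-regularity.

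The main (mild) obstacle is this Schur-multiplier input; note that it in fact uniformly covers all three cases, because the Schur multiplier of a $p$-group is itself a $p$-group so its $p'$-part vanishes, and cyclic groups are a special case of Sylow-cyclic. One could therefore present a single uniform argument via $H^2(C_G(x);k^\times)=0$, but the direct computations above for the $p$-group and cyclic cases are more transparent and avoid the Schur-multiplier citation.
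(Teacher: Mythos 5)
Your proposal is correct and, in substance, matches the paper's own (one-line) proof: the paper simply asserts that in each case the restriction of $\alpha$ to $C_G(x)$ is a coboundary, which is exactly the uniform $H^2(C_G(x);k^\times)=0$ argument you spell out via the Schur multiplier, followed by the computation $\alpha(g,x)=\partial c(g,x)=\partial c(x,g)=\alpha(x,g)$ using $gx=xg$. Your more elementary direct arguments for the cyclic and $p$-group cases (via a lift $\hat c$ of a generator, and via $H\cong C_G(x)/N$ being a cyclic $p'$-quotient of a $p$-group) are correct additional routes, but the essential mechanism is the same as the paper's.
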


\begin{proof}
 In each case the restriction of $\alpha$ to $C_G(x)$ is a coboundary, whence the result. 
\end{proof}

\begin{Proposition}

Suppose $x$ is such that its centraliser $C_G(x)$ is abelian of order divisible by $p$. Then  $H^1(C_G(x);\kax)\neq\{0\}$ if and only if $x$ is $\alpha$-regular.
\end{Proposition}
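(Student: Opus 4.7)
The plan is to apply Proposition \ref{aregprops}(iv), which identifies
\[ H^1(C_G(x); \kax) \;\cong\; H^1(N; \kax)^H \;\cong\; \Hom(N, k)^H, \]
where $N = \ker(\lambda_\alpha)$ and $H = C_G(x)/N$ embeds as a cyclic $p'$-subgroup of $k^\times$ via $\lambda_\alpha$ by Proposition \ref{aregprops}(iii); the final isomorphism uses that $N$ acts trivially on $\kax$. With this in hand, both directions reduce to short character computations.

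For the ``if'' direction, suppose $x$ is $\alpha$-regular. Then $N = C_G(x)$ and $H = 1$, so the group in question becomes $\Hom(C_G(x), k)$. Since $C_G(x)$ is abelian with order divisible by $p$, its Sylow $p$-subgroup is a nontrivial abelian $p$-group surjecting onto $\Z/p$, which in turn embeds into the additive group $(k, +)$. Composing gives a nonzero homomorphism $C_G(x) \to k$.

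Conversely, suppose $x$ is not $\alpha$-regular, so $H \neq 1$. The crux is that abelianness of $C_G(x)$ collapses the $H$-action on $\Hom(N, k)$ to pure scalar multiplication: using the formula in the remark following Proposition \ref{aregprops}, for $g \in C_G(x)$, $a \in N$, $\phi \in \Hom(N, k)$,
\[ {}^g\phi(a) \;=\; \lambda_\alpha(g)\,\phi(g^{-1}ag) \;=\; \lambda_\alpha(g)\,\phi(a), \]
since $g^{-1}ag = a$ in the abelian group $C_G(x)$. Choosing $g \in C_G(x)\setminus N$ gives $\lambda_\alpha(g) \neq 1$ in $k^\times$, so any $H$-invariant $\phi$ satisfies $(\lambda_\alpha(g) - 1)\phi = 0$ and hence $\phi = 0$. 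The essential (and essentially only) subtlety is noticing that abelianness is exactly what is needed to kill the conjugation contribution in the $H$-action, so that the invariants become the weight space for the character $\lambda_\alpha|_H$, which is nontrivial precisely when $x$ fails to be $\alpha$-regular.
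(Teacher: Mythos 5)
Your proof is correct and follows essentially the same route as the paper: both reduce via Proposition \ref{aregprops} to computing $\Hom(N,k)^H$, note that abelianness of $C_G(x)$ makes the $H$-action pure multiplication by $\lambda_\alpha(g)$ so that a nonzero invariant forces $\lambda_\alpha$ trivial, and use that an abelian group of order divisible by $p$ has $\Hom(C_G(x),k)\neq\{0\}$ for the converse. The only cosmetic difference is that you phrase the second direction contrapositively, which changes nothing of substance.
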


\begin{proof}
Note that if $C_G(x)$ is abelian of order divisible by $p$, then $\Hom(C_G(x),k)\neq\{0\}$. In one direction the proof is now clear: if the action of $C_G(x)$ is trivial, then $H^1(C_G(x);\kax)\cong\Hom(C_G(x),k)$.

In the other direction, suppose that $H^1(C_G(x);\kax)\neq\{0\}$, then via the isomorphisms of Proposition \ref{aregprops}(v) we have $H^1(N;\kax)^{H}\neq\{0\}$. Choose a non-zero $1$-cocycle $f:N\to \kax$. As $C_G(x)$ is abelian, we have that $f$ is a fixed by the action of $H$ if and only if  $\lmd_\alpha(g)f(a)=f(a)$ for all $g\in C_G(x)$ and $a\in N$. As $f$ is non-zero one sees that this can occur if and only if the action of $C_G(x)$ is trivial. This completes the proof.
\end{proof}

\begin{Remark}
From this proof it is easy to see that more generally, for $C_G(x)$ not necessarily abelian it is still the case that for all $f\in Z^1(C_G(x);\kax)$, $f\equiv 0$ on $Z(C_G(x))$.
\end{Remark}

\begin{Proposition}\label{chgab}
Let $x$ be a weak Non-Schur element, let $C_{\hG}(\hx)$ be abelian, and suppose that $Z$ is (cyclic) of prime order. Then $x$ is $\alpha$-regular, and $HH^1(k_\alpha G)\neq \{0\}$.
\end{Proposition}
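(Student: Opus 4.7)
The plan is to reduce both assertions to the claim that $x$ is $\alpha$-regular. Once this is established, Proposition~\ref{CD} and Proposition~\ref{aregprops}(iv) give
\[HH^1(k_\alpha G) \supseteq H^1(C_G(x);\kax) \cong \Hom(C_G(x), k),\]
and the weak Non-Schur hypothesis supplies a homomorphism sending the $p$-element $x_p$ to a nonzero element of $k$, since $x_p$ has nontrivial image in $C_G(x)^{ab}$ and has $p$-power order.

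To prove $\alpha$-regularity of $x$, I would argue by contradiction. Suppose $x$ is not $\alpha$-regular. By Proposition~\ref{aregprops}(ii)--(iii), the group $H := C_{\hG}(x)/C_{\hG}(\hx)$ is a nontrivial cyclic $p'$-subgroup of $k^\times$, and picking $\hg \in C_{\hG}(x) \setminus C_{\hG}(\hx)$ yields $[\hg, \hx] = z$ for some nontrivial $z \in Z$. Iterating conjugation of $\hx$ by $\hg$ shows $\hg^q \in C_{\hG}(\hx)$, where $q := |Z|$, so $|H|$ divides $q$ and must equal $q$. The key structural move then combines the abelianness of $A := C_{\hG}(\hx)$ with $q$ being coprime to $p$: decompose $A = A_p \times A_{p'}$ into Sylow subgroups, noting $Z \leq A_{p'}$. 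The conjugation automorphism $\phi := c_{\hg}|_A$ preserves each factor since automorphisms preserve element orders, and matching components of $\phi(\hx) = z\hx$ after writing $\hx = \hx_p \hx_{p'}$ (with $\hx_p \in A_p$ and $\hx_{p'} \in A_{p'}$) gives $\phi(\hx_p) = \hx_p$ and $\phi(\hx_{p'}) = z\hx_{p'}$. Hence $\hg$ commutes with $\hx_p$ in $\hG$. Invoking Lemma~\ref{alpha} for elements of $G$ of order coprime to $q$, the $p$- and $(pq)'$-parts of $\hg$ both lie in $A$, so we may replace $\hg$ by its $q$-part while preserving $[\hg,\hx] = z$; an analogous refinement $A_{p'} = A_q \times A_{(pq)'}$ shows $\hg$ also commutes with the $(pq)'$-part $\hat{w}$ of $\hx$, leaving $[\hg, \hy] = z$ where $\hy$ denotes the $q$-part of $\hx$.

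The main obstacle is completing the contradiction from this reduced configuration: a $q$-element $\hg \in C_{\hG}(x) \setminus A$ and a $q$-element $\hy \in A$ with $[\hg, \hy] = z$ generating $Z$. My hope is that the nilpotent class-two subgroup $\langle \hg, \hy\rangle \leq \hG$, a nontrivial central extension of the abelian $q$-group $\langle g, y\rangle \leq C_G(x)$ by $Z$, combined with the position of $\hx_p \in A_p^\phi$ in the centralisers of both $\hg$ and $\hx$, forces $\hx_p \in C_{\hG}(x)' Z$ and hence $x_p \in C_G(x)'$, contradicting the weak Non-Schur property. A direct argument would analyse commutators in $\langle \hg, \hy\rangle$ together with the $\phi$-action on $A_p$; alternatively, one could deploy the fundamental exact sequence for $1 \to A \to C_{\hG}(x) \to H \to 1$ cohomologically to finish, refining the analysis of $H^1(N;\kax)^H$ beyond what the preceding proposition requires.
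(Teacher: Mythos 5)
Your reduction steps are fine (the passage to $[\hg,\hx]=z$, the decomposition of the abelian group $A=C_{\hG}(\hx)$ into primary parts preserved by $\phi=c_{\hg}|_A$, and the conclusion that $\phi$ fixes $\hx_p$ and that one may replace $\hg$ and $\hx$ by their $q$-parts), but the proof is not complete: the final contradiction is only ``hoped for'', and it is precisely the content of the proposition. Worse, the specific contradiction you aim at cannot come out of the reduced configuration. In that configuration $C_{\hG}(x)=A\langle\hg\rangle$ with $A$ abelian normal of prime index $q$, so $C_{\hG}(x)'=[\hg,A]=(\phi-1)A$, which contains $z$; hence $x_p\in C_G(x)'$ is equivalent to $\hx_p\in(\phi-1)A_p$. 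But $\phi$ acts on the abelian $p$-group $A_p$ with order dividing $q$, coprime to $p$, so $A_p=C_{A_p}(\phi)\times(\phi-1)A_p$, and since you have already shown $\phi(\hx_p)=\hx_p$, membership $\hx_p\in(\phi-1)A_p$ would force $\hx_p=1$. In other words, whenever $x$ genuinely has a nontrivial $p$-part, the weak Non-Schur condition is automatically compatible with your reduced data, so no amount of commutator analysis inside $\langle\hg,\hy\rangle$ (nor the cohomological variant you suggest) can produce the contradiction along the route ``force $x_p\in C_G(x)'$''. The missing idea has to involve the location of $x$ itself, not just of its $p$-part.

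This is where the paper's argument differs. It works downstairs with $N=\ker(\lmd_\alpha)$, which is abelian with $C_G(x)/N\cong C_\ell$ ($\ell=|Z|$), writes $N=M\times Q$ with $M$ the $\ell'$-Hall subgroup, and uses Schur--Zassenhaus to split $C_G(x)=M\rtimes S$ with $S$ a Sylow $\ell$-subgroup; the dichotomy is then on $x$: if $x$ lies in $M$ it has order coprime to $|Z|$ and Lemma \ref{alpha} (not the weak Non-Schur property) yields $\alpha$-regularity, while the alternative makes $x$ an $\ell$-element, so $x_p=1\in C_G(x)'$, and only here does weak Non-Schur enter, as a contradiction. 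So the decisive tools absent from your sketch are Lemma \ref{alpha} applied to $x$ itself together with an argument pinning down where $x$ sits relative to the splitting $M\rtimes S$; your final-paragraph plan, by contrast, targets a statement about $x_p$ that is provably unreachable from the hypotheses. Separately, a minor imprecision in your first paragraph: a homomorphism $C_G(x)\to k$ need not be nonzero on $x_p$ (its image could lie in the Frattini subgroup of the $p$-part of the abelianisation); what the weak Non-Schur property gives, as in Proposition \ref{nonvanishingkg}, is only that $\Hom(C_G(x),k)\neq\{0\}$, which is all that is needed.
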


\begin{proof}
Note that if the $p$-part of $x$ is not in $C_G(x)'$, then $C_{\hG}(\hx)$ has order divisible by $p$. Since $C_{\hG}(\hx)$ is abelian then by Proposition \ref{aregprops}(ii), $N$ must be abelian. What is more, all Sylow $\ell$-subgroups of $C_{\hG}(x)$ and $C_G(x)$ are abelian, except for possibly when $\ell=|Z|$ (see for example \cite[Proposition 2.2]{Humph}). Recall that $|Z|$ is coprime to $p$.

We may assume that $H$ is non-trivial else $x$ is $\alpha$-regular by definition. Since $\alpha$ takes values in $Z$, we have that $H\cong Z\cong C_\ell$. Whence $C_{\hG}(x)$ and $C_G(x)$ are metabelian, and in particular they are each extensions of a cyclic group by an abelian group (and therefore also solvable).

Let $Q$ be the Sylow $\ell$-subgroup of $N$, $S$ a Sylow $\ell$-subgroup of $C_G(x)$ (necessarily containing $Q$), and write $N=M\times Q$ where $M$ is the largest $\ell'$-subgroup of $N$. Then $M$ is normal in $C_G(x)$ and by the Schur-Zassenhaus theorem we have that $C_G(x)=M\rtimes S$. Whence $x$ is either in $M$ or $S$. If $x\in M\subseteq N$ then $x$ is $\alpha$-regular and we are done. Otherwise $x\in S$, which forces the $p$-part of $x$ to be trivial - as $\ell$ is coprime to $p$ - and thus contained in $C_G(x)'$, in contradiction to the hypothesis. Thus $x$ is $\alpha$-regular, $H^1(C_G(x);\kax)\cong \Hom(C_G(x),k)$ by Lemma \ref{alpha}, and the result follows by Proposition \ref{CD}.
\end{proof}

\begin{Definition} Following the terminology of \cite{FJL}, we make the following definitions.\begin{itemize}
    \item [(1)] We say that $G$ satisfies the \textit{weak Non-Schur property} $W(p)$ if there is some $x\in G$ such that its $p$-part is not contained in $C_G(x)'$.
    
    \item[(2)] We say that $G$ satisfies the \textit{strong Non-Schur property} $S(p)$ if there is a $p$-element $x\in G$ such that $x$ is not contained in $C_G(x)'$, and we call such an element $x$ a \textit{strong Non-Schur element}.
\end{itemize}
\end{Definition}

\noindent Evidently we have $S(p)$ implies $W(p)$. What is more, the main result of \cite{FJL} proves that $W(p)$ holds for all finite groups $G$. As noted in the introduction, this may be used in conjunction with Proposition \ref{CD}, setting $[\alpha]=[1]$ in $H^2(G;k^\times)$, to show that $HH^1(kG)\neq\{0\}$ for all finite groups. We record this fact here.

\begin{Proposition}\label{nonvanishingkg}
Let $G$ satisfy $W(p)$ for some weak Non-Schur element $x$. Then $\Hom(C_G(x),k)$ is nonzero. In particular, $HH^1(kG)\neq\{0\}$ for all finite groups $G$.
\end{Proposition}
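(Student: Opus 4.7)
The plan is to first produce a nonzero homomorphism $C_G(x)\to k$ directly from the weak Non-Schur hypothesis, and then feed this into the centraliser decomposition (Proposition \ref{CD}) specialised to the trivial cohomology class.

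First I would argue that $\Hom(C_G(x),k)\neq\{0\}$ as follows. Write $x=x_p x_{p'}$ for the $p$-part and $p'$-part of $x$; by hypothesis $x_p\notin C_G(x)'$, so the image of $x_p$ in the abelianisation $C_G(x)/C_G(x)'$ is non-trivial. Since $x_p$ has $p$-power order, this image lies in the Sylow $p$-subgroup $P$ of the finite abelian group $C_G(x)/C_G(x)'$, and $P$ is therefore non-trivial. Any non-trivial finite abelian $p$-group admits a surjection onto $\mathbb{Z}/p\mathbb{Z}$, which in turn embeds into the additive group of $k$ (because $\chr(k)=p$ and $k$ is algebraically closed). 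Composing with the canonical quotient $C_G(x)\twoheadrightarrow C_G(x)/C_G(x)'\twoheadrightarrow P$ produces a nonzero homomorphism $C_G(x)\to k$, as required.

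Next I would apply Proposition \ref{CD} with $[\alpha]=[1]\in H^2(G;k^\times)$. In that case the action of $C_G(x)$ on $k\hx$ is trivial for every conjugacy class representative, so for the chosen $x$ we have
\[
H^1(C_G(x);k\hx)\cong H^1(C_G(x);k)\cong \Hom(C_G(x),k)\neq\{0\}.
\]
Since this is one summand of the centraliser decomposition, the full direct sum $HH^1(kG)$ is nonzero. For the final assertion, I would invoke the Fleischmann--Janiszczak--Lempken theorem \cite{FJL}, which guarantees that every finite group $G$ of order divisible by $p$ satisfies $W(p)$, so the argument above applies uniformly.

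There is no substantive obstacle in this proposition itself; the content resides entirely in the Fleischmann--Janiszczak--Lempken result that provides the weak Non-Schur element. The only mild subtlety in the presentation is noting that the passage from "$x_p\notin C_G(x)'$" to a genuine character into $(k,+)$ uses the characteristic-$p$ hypothesis to embed a cyclic group of order $p$ into $k$, which is why the argument does not need to distinguish between the $p$-part and $p'$-part of the abelianisation beyond what has been done above.
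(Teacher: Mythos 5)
Your proof is correct and follows essentially the same route as the paper: both arguments extract a nonzero homomorphism $C_G(x)\to k$ from the nontrivial $p$-part of the abelianisation $C_G(x)/C_G(x)'$ (the paper phrases this via the Frattini quotient of the maximal $p$-group quotient, which additionally records the rank as $\dim_k\Hom(C_G(x),k)$, whereas you simply surject onto a copy of $\Z/p\Z\subseteq(k,+)$), and both then conclude by applying the centraliser decomposition of Proposition \ref{CD} with $[\alpha]=[1]$ together with the result of \cite{FJL} that every finite group of order divisible by $p$ satisfies $W(p)$. There are no gaps.
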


\begin{proof}
Recall that $O^p(G)$ denotes the smallest normal subgroup of $G$ such that the quotient group $G/O^p(G)$ is a $p$-group. Let $A=C_G(x)/C_G(x)'$, which is of order divisible by $p$, since $G$ satisfies $W(p)$. Then with $R=A/O^p(A)$, the quotient $R/\Phi(R)$ is a nontrivial elementary abelian $p$-group with rank equal to $\dim_k(\Hom(C_G(x),k))$. The second statement then follows from \cite{FJL} and Proposition \ref{CD}. 
\end{proof}

It is not true in general that $W(p)$ implies $S(p)$, as shown in \cite{FJL}. Indeed, were this the case then one could easily show that $HH^1(k_\alpha G)\neq\{0\}$ for \textit{all} finite groups $G$ and all $\alpha\in Z^2(G;k^\times)$, via the following result.

\begin{Lemma}\label{nonvanishingsp}
Let $G$ satisfy $S(p)$. Then for all $\alpha\in Z^2(G;k^\times)$, every strong Non-Schur element $x$ is $\alpha$-regular. In particular,  $HH^1(k_\alpha G)\neq\{0\}$.
\end{Lemma}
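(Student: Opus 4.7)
The proof should be a short direct consequence of the machinery already established. Here is my plan.

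First I would observe that by definition a strong Non-Schur element $x$ is a $p$-element. Since $[\alpha]$ corresponds to a central extension $1\to Z\to \hG\to G\to 1$ with $Z$ a cyclic $p'$-group, the order of $x$ is coprime to $|Z|$. Lemma \ref{alpha} then applies directly to give that $x$ is $\alpha$-regular and that the action of $C_G(x)$ on $\kax$ given in Proposition \ref{CD} is trivial.

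Next I would deduce that $H^1(C_G(x);\kax)$ is nonzero. Since $C_G(x)$ acts trivially on $\kax$, there is an isomorphism of $k$-modules
\[ H^1(C_G(x);\kax)\cong H^1(C_G(x);k)\cong \Hom(C_G(x),k). \]
Because $x$ is a strong Non-Schur element, $x\notin C_G(x)'$ and $x$ has $p$-power order. Hence the image of $x$ in the abelianisation $C_G(x)/C_G(x)'$ is a nontrivial $p$-element, so exactly as in the proof of Proposition \ref{nonvanishingkg} one obtains a nontrivial quotient of $C_G(x)/C_G(x)'$ which is elementary abelian of $p$-power order, yielding $\Hom(C_G(x),k)\neq\{0\}$.

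Finally, applying the centraliser decomposition from Proposition \ref{CD} with $x$ contributing the nonzero summand $H^1(C_G(x);\kax)$ gives $HH^1(k_\alpha G)\neq\{0\}$. There is no serious obstacle here: all the work is packaged in Lemma \ref{alpha} and Proposition \ref{CD}, and the conclusion follows by a bookkeeping argument identical to the untwisted case of Proposition \ref{nonvanishingkg}.
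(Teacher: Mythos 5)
Your proposal is correct and follows essentially the same route as the paper's proof: Lemma \ref{alpha} gives $\alpha$-regularity (hence trivial action on $\kax$) since $x$ is a $p$-element and $Z$ is a $p'$-group, the nonvanishing of $\Hom(C_G(x),k)$ is exactly the content of Proposition \ref{nonvanishingkg}, and Proposition \ref{CD} then yields $HH^1(k_\alpha G)\neq\{0\}$. Your unpacking of why $\Hom(C_G(x),k)\neq\{0\}$ merely restates the argument the paper delegates to Proposition \ref{nonvanishingkg}, so there is no substantive difference.
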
 

\begin{proof}
On the one hand, $x$ is a $p$-element satisfying $\Hom(C_G(x),k)\neq\{0\}$ by Proposition \ref{nonvanishingkg}. On the other, by Lemma \ref{alpha} $x$ is $\alpha$-regular, so $H^1(C_G(x);k_\alpha\hx)\cong \Hom(C_G(x),k)$, and the result follows by Proposition \ref{CD}.
\end{proof}

It is easy to show a slight reformulation of the proposition above: any group satisfying $S(p)$ also satisfies $\dim_k(HH^1(k\hG))>\dim_k(HH^1(kG))$, so that the left hand side of (\ref{dimsum}) is nonzero.

One notes that it is somewhat tautological to refer to group elements as \textit{strong Non-Schur $\alpha$-regular} elements, though we will proceed to do so as it distinguishes this more powerful property from the weak Non-Schur $\alpha$-regular case. 

\begin{Lemma}\cite[Lemma 1.2]{FJL}\label{absp}
Let $P$ be a Sylow $p$-subgroup of $G$ and suppose that $Z(P)$ is not contained in $P'$. Then $G$ satisfies $S(p)$. In particular, $G$ satisfies $S(p)$ if it has abelian Sylow $p$-subgroups.
\end{Lemma}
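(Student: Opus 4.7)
The plan is to exhibit an explicit strong Non-Schur element via a direct transfer argument. By hypothesis I choose $x\in Z(P)\setminus P'$, which is automatically a nontrivial $p$-element. Setting $H:=C_G(x)$, the containment $x\in Z(P)$ ensures $P\leq H$; since $P$ is a Sylow $p$-subgroup of $G$ it remains a Sylow $p$-subgroup of $H$, and so $n:=[H:P]$ is coprime to $p$.

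Next I would compute the transfer homomorphism $V\colon H\to P/P'$ on $x$ directly from its definition. Choosing coset representatives $t_1,\dots,t_n$ for $H/P$ and decomposing the permutation action of $\langle x\rangle$ on $H/P$ into orbits, an orbit of length $k$ with representative $t$ contributes the factor $t^{-1}x^{k}t\in P$ to $V(x)$. The crucial simplification in our setting is that every $t\in H=C_G(x)$ commutes with $x$, so $t^{-1}x^{k}t=x^{k}$ outright. Summing over orbits, whose lengths sum to $n$, yields $V(x)=x^{n}$ in $P/P'$. Since $P/P'$ is a $p$-group and $\gcd(n,p)=1$, the $n$-th power map on $P/P'$ is bijective, so $x\notin P'$ forces $V(x)\neq 1$. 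Because $V$ takes values in the abelian group $P/P'$, it kills $H'$; hence $x\notin H'=C_G(x)'$, and $x$ is a strong Non-Schur element of $G$.

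The ``in particular'' clause is immediate: when $P$ is abelian, $P'=\{1\}$ while $Z(P)=P$ is nontrivial (since $p\mid|G|$), so $Z(P)\not\subseteq P'$ holds automatically and the first part of the statement applies. I expect no real obstacle: the key observation is simply that $H=C_G(x)$ forces every element of $H$ to centralise $x$, which collapses the transfer formula to $V(x)=x^{[H:P]}$; everything else is routine.
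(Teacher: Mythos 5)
Your argument is correct. The paper itself does not prove this lemma --- it is quoted directly from Fleischmann--Janiszczak--Lempken \cite[Lemma 1.2]{FJL} --- so there is no in-paper proof to compare against; your transfer computation is the standard (and essentially the original) argument for this fact. All the steps check out: $x\in Z(P)\setminus P'$ gives $P\leq H:=C_G(x)$, so $P$ is Sylow in $H$ with $n=[H:P]$ prime to $p$; the orbit formula for the transfer $V\colon H\to P/P'$ collapses to $V(x)=x^nP'$ because every coset representative lies in $C_G(x)$; and since the $n$-th power map is bijective on the abelian $p$-group $P/P'$, $x\notin P'$ forces $V(x)\neq 1$, whence $x\notin H'=C_G(x)'$ as $H'\leq\ker V$. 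The abelian Sylow case follows as you say, using that $P$ is nontrivial because $p$ divides $|G|$.
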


\section{Proof of Theorem \ref{main}}\label{sec3}

Throughout this section we fix $k$ to be an algebraically closed field of characteristic $p$ dividing the order of a finite group $G$; all notation is as in Sections \ref{sec1} and \ref{sec2}. We begin with abelian groups.

\begin{Lemma}\label{abareg}
Let $G$ be an abelian group, $x\in G$ a $p$-element. Then $x$ is an $\alpha$-regular, strong Non-Schur element of $G$. In particular, $HH^1(k_\alpha G)\neq \{0\}$.
\end{Lemma}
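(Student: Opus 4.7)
The plan is to read off both conclusions directly from the abelian hypothesis together with the auxiliary lemmas already established. Since $G$ is abelian, $C_G(x)=G$ and $C_G(x)'=G'=\{1\}$ for every $x\in G$. Hence, for any nontrivial $p$-element $x$, its $p$-part (namely $x$ itself) is not contained in $C_G(x)'=\{1\}$, so $x$ is a strong Non-Schur element by definition. (If $G$ contains no nontrivial $p$-element, the claim is vacuous, but by hypothesis $p$ divides $|G|$ so Cauchy's theorem supplies such an $x$.)

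Next I would invoke Lemma \ref{alpha}: since $Z$ is a cyclic $p'$-subgroup of $k^\times$ and $x$ is a $p$-element, the orders of $x$ and $Z$ are coprime, so $x$ is $\alpha$-regular. This gives the first assertion.

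For the Hochschild cohomology statement, I would apply Proposition \ref{CD}: the summand of $HH^1(k_\alpha G)$ indexed by the class of $x$ is $H^1(C_G(x);\kax)=H^1(G;\kax)$. By $\alpha$-regularity of $x$ the action of $C_G(x)$ on $\kax$ is trivial, so this summand is $\Hom(G,k)$, which is nonzero because $G$ is abelian of order divisible by $p$ (alternatively, this is just the conclusion of Lemma \ref{nonvanishingsp} applied to our strong Non-Schur element). Therefore $HH^1(k_\alpha G)\neq\{0\}$.

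There is no real obstacle here: the whole argument is a bookkeeping exercise, combining the abelian hypothesis (which trivialises the commutator subgroup and the conjugation action) with the two facts that $p$-elements are automatically $\alpha$-regular (Lemma \ref{alpha}) and that the centraliser decomposition detects $\Hom(G,k)$ (Proposition \ref{CD}).
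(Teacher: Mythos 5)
Your proof is correct and follows essentially the same route as the paper: the abelian hypothesis gives $C_G(x)'=\{1\}$, so $x$ is a strong Non-Schur element, and the paper then simply cites Lemma \ref{nonvanishingsp}, whose proof you have merely unpacked (Lemma \ref{alpha} for $\alpha$-regularity plus Proposition \ref{CD} for the nonvanishing). No gaps; the extra remark about nontrivial $p$-elements existing is a harmless refinement the paper leaves implicit.
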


\begin{proof}
Such an $x$ is a strong Non-Schur element by virtue of the fact that $C_G(x)'=G'=\{1\}$. The result follows from Lemma \ref{nonvanishingsp}
\end{proof}

\noindent In fact, we can show something a little stronger.

\begin{Theorem}\label{abelian}
Suppose $O^p(G)$ is a proper subgroup of $G$. Then $HH^1(k_\alpha G)\neq \{0\}$ for all $\alpha \in Z^2(G;k^\times)$.
\end{Theorem}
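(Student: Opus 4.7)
The plan is to verify the hypothesis of Lemma~\ref{nonvanishingsp} by producing a strong Non-Schur element of $G$. The assumption that $O^p(G)$ is proper in $G$ means that $G/O^p(G)$ is a nontrivial finite $p$-group, so its abelianisation is a nontrivial finite abelian $p$-group and in particular admits a surjection onto $\Z/p$. Composing with the natural projection, I obtain a surjective homomorphism $\phi\colon G\twoheadrightarrow \Z/p$.

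Next I would extract a $p$-element of $G$ not killed by $\phi$. Pick any $g\in G$ with $\phi(g)\neq 0$ and decompose $g=g_p g_{p'}$ into its commuting $p$- and $p'$-parts. Since $\phi(g_{p'})$ has order coprime to $p$ inside $\Z/p$, it must be trivial, whence $\phi(g_p)=\phi(g)\neq 0$. Setting $x:=g_p$ produces a nontrivial $p$-element of $G$ with $\phi(x)\neq 0$.

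Since $\Z/p$ is abelian, $\ker(\phi)\supseteq G'\supseteq C_G(x)'$, and the condition $\phi(x)\neq 0$ then forces $x\notin C_G(x)'$. Hence $x$ is a strong Non-Schur element, so $G$ satisfies $S(p)$, and Lemma~\ref{nonvanishingsp} delivers $HH^1(k_\alpha G)\neq\{0\}$. There is no genuine obstacle in the argument: once the hypothesis $O^p(G)\neq G$ is translated into the existence of a surjection $G\twoheadrightarrow\Z/p$, the remaining steps are routine, the only mildly delicate point being the replacement of a detecting element by its $p$-part, which relies on the abelianness of $\Z/p$ together with the coprimality of $p$- and $p'$-orders.
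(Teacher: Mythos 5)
Your proof is correct, but it follows a genuinely different route from the paper's. The paper does not construct any Non-Schur element at all: from $O^p(G)\neq G$ it deduces $\Hom(G,k)\neq\{0\}$, and then uses the normalisation $\alpha(g,1)=\alpha(1,1)=\alpha(1,g)$ valid for every $2$-cocycle to see that the identity element is $\alpha$-regular, so that the summand of Proposition \ref{CD} indexed by the class $x=1$ is $H^1(G;k_\alpha\hat{1})\cong\Hom(G,k)\neq\{0\}$. You instead lift the hypothesis to a surjection $\phi\colon G\twoheadrightarrow \Z/p$, pass to the $p$-part of a detecting element to obtain a nontrivial $p$-element $x$ with $\phi(x)\neq 0$, note $C_G(x)'\subseteq G'\subseteq\ker\phi$ to conclude $x\notin C_G(x)'$, and then invoke Lemma \ref{nonvanishingsp}, which in turn rests on Lemma \ref{alpha} (the Schur--Zassenhaus argument showing $p$-elements are $\alpha$-regular), Proposition \ref{nonvanishingkg} and Proposition \ref{CD}. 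Each step of yours is sound, including the slightly delicate replacement of $g$ by $g_p$. The trade-off: the paper's choice $x=1$ is lighter, needing only the cocycle normalisation and no Non-Schur machinery, while your argument proves the stronger structural statement that $O^p(G)\neq G$ forces $G$ to satisfy $S(p)$, so the nonvanishing is witnessed by the summand of a nontrivial conjugacy class rather than by the identity class.
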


\begin{proof}
Let $R=G/O^p(G)$ be the largest $p$-group quotient of $G$, then $R/\Phi(R)$ is a non-trivial elementary abelian $p$-group. Whence $\dim_k(\Hom(G,k))=\dim_k(\Hom(R/\Phi(R),k))\neq0$. Any cocycle $\alpha\in H^2(G;k^\times)$ satisfies $\alpha(g,1)=\alpha(1,1)=\alpha(1,g)$ for all $g\in G$ \cite[Proposition 1.2.5]{LinBlockI}, so on choosing the conjugacy class representative $x=1$ in the decomposition of Proposition \ref{CD}, one sees that the action of $G=C_G(x)$ on $\kax$ is trivial. The result follows. 
\end{proof}

\begin{Proposition}\label{S(p)}
Let $G$ be a finite simple group of order divisible by $p$ and $q$ a prime power. Suppose further that $G$ is one of the following.
\begin{itemize}
    \item An alternating group $A_n$ for $n\geq 5$.
    \item A sporadic group.
    \item A Chevalley group of classical type, $Gl_n(q),Sl_n(q),PSl_n(q),U_n(q),SU_n(q),PSU_n(q), \\ SO_{2n+1}(q), P\Omega_{2n+1}(q),Sp_{2n}(q),PSp_{2n}(q),SO_{2n}(q),P\Omega_{2n}(q),SO_{2n}^-(q)$ or $P\Omega_{2n}^-(q)$.
\end{itemize}
Then for all $\alpha \in Z^2(G;k^\times)$, $G$ contains weak Non-Schur $\alpha$-regular elements, and in particular $HH^1(k_\alpha G)\neq \{0\}$.
\end{Proposition}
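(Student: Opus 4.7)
The overall plan is to strengthen the conclusion: I will show that every $G$ in the listed families satisfies the \emph{strong} Non-Schur property $S(p)$, from which the proposition follows at once. Indeed, if $x$ is a strong Non-Schur element then $x$ is a $p$-element, so Lemma \ref{alpha} (together with the fact that $|Z|$ is always coprime to $p$) shows that $x$ is $\alpha$-regular for every $\alpha \in Z^2(G;k^\times)$, and then Lemma \ref{nonvanishingsp} yields $HH^1(k_\alpha G) \ne \{0\}$. In particular a strong Non-Schur element is automatically a weak Non-Schur $\alpha$-regular element.

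I would then treat the three families in turn. For an alternating group $A_n$ with $n \ge 5$, if $p$ is odd then a Sylow $p$-subgroup $P$ of $A_n$ coincides with that of $S_n$, which is an iterated wreath product based on the $p$-adic expansion of $n$; a direct inspection of the base of the top wreath layer produces an element of $Z(P)$ lying outside $P'$, so Lemma \ref{absp} applies. For $p = 2$, one instead checks that the Sylow $2$-subgroup of $A_n$ still satisfies $Z(P) \not\subseteq P'$ (for $n = 5$ the Sylow $2$-subgroup is a Klein four-group; for larger $n$ one argues by the same wreath-product analysis, handling the intersection with $A_n$ by hand). For sporadic groups the verification is case-by-case from the ATLAS: either the Sylow $p$-subgroup is abelian, in which case Lemma \ref{absp} immediately gives $S(p)$, or the explicit structure makes it straightforward to exhibit a central element of $P$ outside $P'$. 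Both cases are already carried out in \cite{FJL}, so I would simply cite this.

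For the classical Chevalley groups, the defining characteristic case is the delicate one, because for $\mathrm{SL}_n$ with $n \ge 3$ the center of the upper unitriangular Sylow $p$-subgroup $U$ can lie inside $U'$, so Lemma \ref{absp} is not always available. Here one must instead choose a specific $p$-element $x$ (for example a long root element, or a suitably chosen unipotent in a Levi subgroup) and estimate $C_G(x)'$ directly in $G$ rather than in $U$: the point is that $C_G(x)$ contains a large torus or Levi factor whose abelianization kills $x$ only modulo $p'$-parts, so $x \notin C_G(x)'$. In the cross-characteristic case, a Sylow $p$-subgroup is a product of Sylow $p$-subgroups of general linear groups over extensions of $\F_q$, which are abelian or have obvious central non-commutators, so Lemma \ref{absp} applies. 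All of these verifications are precisely the content of the main theorem of \cite{FJL} restricted to the listed families, and I would invoke that result rather than reproduce the case analysis.

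The substantive mathematical work is thus entirely inherited from \cite{FJL}; the novelty here is merely the observation that the elements \cite{FJL} produce are $p$-elements and hence automatically $\alpha$-regular, which is what allows the passage from $HH^1(kG) \neq 0$ to $HH^1(k_\alpha G) \neq 0$. The main obstacle, and the reason this proposition is stated for these three families rather than for all finite simple groups, is that certain exceptional Chevalley groups (and the Tits group) are not known to satisfy $S(p)$ in all characteristics; those remaining cases must instead be handled with only the weak Non-Schur property, and will require the more refined $\alpha$-regularity arguments developed via Propositions \ref{aregprops} and \ref{chgab} in a separate proposition.
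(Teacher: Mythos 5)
Your overall strategy (reduce to $S(p)$, then use Lemma \ref{alpha} and Lemma \ref{nonvanishingsp}) is exactly the paper's strategy for the bulk of the cases, but your blanket claim that every group in the three listed families satisfies $S(p)$ is false, and this is where the proof has a genuine gap. The result of \cite{FJL} that you invoke does \emph{not} establish $S(p)$ for all sporadic groups in all relevant characteristics: the pairs $(Ru,3)$, $(J_4,3)$ and $(Th,5)$ are exceptions in which only the weak property $W(p)$ is known (indeed the paper later stresses that $W(p)$ does not imply $S(p)$, citing precisely \cite{FJL}). For these pairs your argument via Lemma \ref{alpha} simply does not apply, because the weak Non-Schur element produced there is not a $p$-element, so its $\alpha$-regularity is not automatic.

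The missing cases can be repaired, and the paper does so as follows: for $J_4$ and $Th$ the Schur multiplier is trivial, so $H^2(G;k^\times)$ is trivial and there is nothing to prove beyond the untwisted case; for $Ru$ at $p=3$ one uses the element $x$ of order divisible by $3$ from \cite[Proposition 2.2]{FJL} with $C_G(x)=\langle x\rangle$ cyclic, so that $x$ is $\alpha$-regular by Lemma \ref{cyclic} even though it is only a weak Non-Schur element. Your closing remark also misidentifies the boundary of the proposition: the need for the weak (rather than strong) formulation already arises inside the sporadic family, not only for exceptional groups of Lie type. With these three pairs treated separately your proof would coincide with the paper's; as written, it is incomplete.
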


\begin{proof}
This is an immediate consequence of \cite[Propositions 2.1,2.2 and 3.2]{FJL}, and in fact, aside from the exceptions $(G,p)\in \{(Ru,3),(J_4,3),(Th,5)\}$ these groups all satisfy $S(p)$, and so the result holds in these cases.

For the exceptions we have the following: the Schur multiplier of $J_4$ and $Th$ is trivial whence $H^2(G;k^\times)$ is also, and there is nothing to show. For the case $G=Ru$, one can find an element $x\in G$ of order divisible by $p$ such that $C_G(x)=\langle x \rangle$ \cite[Proposition 2.2]{FJL}. The result follows from Lemma \ref{cyclic}.

\end{proof}

We now turn to the finite groups Lie type. 

\begin{Lemma}\label{trivschur}
Let $G$ be one of $^2G_2(3^{2n+1})$ for some $n\geq 1$, the simple group $^2G_2(3)'$, $^3D_4(q^3)$, $^2F_4(q^2)$, $E_8(q)$ or the simple Tits group $^2F_4(2)'$, for some prime power $q$. Then for all $\alpha\in Z^2(G;k^\times)$, $G$ has strong Non-Schur $\alpha$-regular elements, and in particular $HH^1(k_\alpha G)\neq \{0\}$.
\end{Lemma}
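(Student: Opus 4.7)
The plan is to reduce the entire statement to Lemma \ref{nonvanishingsp}. That lemma already tells us that once $G$ is shown to satisfy $S(p)$, every strong Non-Schur element is automatically $\alpha$-regular (by Lemma \ref{alpha}, since such an element is a $p$-element and $|Z|$ is coprime to $p$), and therefore $HH^1(k_\alpha G) \neq \{0\}$ for every $\alpha \in Z^2(G; k^\times)$. So the full content of the lemma boils down to exhibiting a strong Non-Schur element in each of the groups in the list, for every prime $p$ dividing the group order.

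First I would dispose of the small case $^2G_2(3)' \cong \mathrm{PSL}_2(8)$, which has order $2^3 \cdot 3^2 \cdot 7$: its Sylow $2$-subgroup is elementary abelian of order $8$, and its Sylow $3$- and $7$-subgroups are cyclic. Since all Sylow subgroups are abelian, Lemma \ref{absp} immediately delivers $S(p)$ for each $p \in \{2,3,7\}$.

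For the remaining families $^2G_2(3^{2n+1})$ ($n \geq 1$), $^3D_4(q^3)$, $^2F_4(q^2)$, $E_8(q)$, and the Tits group $^2F_4(2)'$, I would appeal directly to the case-by-case analysis carried out in \cite{FJL}, where strong Non-Schur elements are exhibited for each of these groups. In the defining characteristic the arguments typically invoke Lemma \ref{absp}: a root subgroup element sits in the centre of a Sylow $p$-subgroup but outside its derived subgroup. In the non-defining characteristic primes, one uses structural information about centralisers of semisimple elements, showing that a suitable $p$-part lies outside $C_G(x)'$. Once $S(p)$ has been verified in every such case, Lemma \ref{alpha} upgrades the strong Non-Schur element to an $\alpha$-regular one, and Lemma \ref{nonvanishingsp} finishes the proof.

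The main obstacle is not conceptual but bookkeeping: ensuring that \emph{every} prime $p$ dividing the order of \emph{every} group in the list has actually been covered by \cite{FJL}, particularly the non-defining characteristic primes for $E_8(q)$ (where the variety of prime divisors of $|E_8(q)|$ is considerable) and the exceptional small parameters in $^3D_4$, $^2F_4$, and $^2G_2$. Since \cite{FJL} dispatches each of these via explicit element constructions in the relevant tori or unipotent subgroups, my role is to assemble the references rather than reprove the structural results.
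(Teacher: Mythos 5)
Your reduction to Lemma \ref{nonvanishingsp} is sound in principle, and your treatment of $^2G_2(3)'\cong \mathrm{PSL}_2(8)$ via Lemma \ref{absp} is correct. But the heart of your argument is the assertion that \cite{FJL} exhibits \emph{strong} Non-Schur elements for every prime dividing the order of each of the remaining groups, and you explicitly leave this unverified (``my role is to assemble the references''). That is exactly where the difficulty lies, so as written there is a genuine gap: the main theorem of \cite{FJL} only guarantees $W(p)$ for all finite groups, not $S(p)$, and for exceptional groups of Lie type at non-defining primes dividing the Weyl group order the elements produced in \cite{FJL} are in several cases only weak Non-Schur elements -- this is precisely why the present paper needs separate arguments for $G_2(4)$ and $F_4(2)$ at $p=3$ and for $E_6(q)$, ${}^2E_6(q^2)$, $E_7(q)$ (Propositions \ref{G2}, \ref{F4}, \ref{e67}). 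For $E_8(q)$ at $p=2,3,5,7$, for ${}^3D_4(q^3)$ and ${}^2F_4(q^2)$ at their torsion primes, and for the Tits group, you would have to check case by case that a strong Non-Schur element actually exists, and your proposed route collapses for any case where only $W(p)$ is available.

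You have also missed the observation that makes the paper's proof a one-liner and explains why exactly these groups are bundled together: every group in the list has trivial Schur multiplier. Hence $H^2(G;k^\times)\cong M(G)_{p'}$ is trivial, every $2$-cocycle $\alpha$ is a coboundary, every element of $G$ is $\alpha$-regular (regularity depends only on the class $[\alpha]$), and $k_\alpha G\cong kG$, so the nonvanishing of $HH^1(k_\alpha G)$ is just the untwisted statement, i.e.\ Proposition \ref{nonvanishingkg} applied to the weak Non-Schur elements supplied by \cite{FJL}. In particular no prime-by-prime verification of $S(p)$ is needed at all; your approach substitutes a substantial and unchecked bookkeeping task for an argument that requires none.
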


\begin{proof}
One sees this by noting that in all cases the Schur multiplier is trivial. 
\end{proof}

\begin{Proposition}\label{LieDiv}
Let $G$ be a finite group of Lie type, defined over a field of characteristic $r$ with $q=r^m$ for some $m$. Let $\alpha \in Z^2(G;k^\times)$. Then we have the following.
\begin{itemize}
    \item [(i)] If $p=r$ then $G$ contains strong Non-Schur $\alpha$-regular elements.
    
    \item [(ii)] Suppose that $p\neq r$. If $W$ is a Weyl group of $G$ and $p$ does not divide the order of $W$, then $G$ contains strong Non-Schur $\alpha$-regular elements.
\end{itemize}
In each case, $HH^1(k_\alpha G)\neq \{0\}$.
\end{Proposition}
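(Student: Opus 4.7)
The plan is to reduce both parts to the combination of Lemma \ref{absp}, Lemma \ref{alpha}, and Lemma \ref{nonvanishingsp}, so that the problem becomes purely one about the structure of a Sylow $p$-subgroup of $G$. Since $Z$ is by construction a $p'$-group, any $p$-element of $G$ is automatically $\alpha$-regular by Lemma \ref{alpha}. Consequently, it suffices in each case to exhibit a strong Non-Schur $p$-element; the conclusion $HH^1(k_\alpha G)\neq\{0\}$ will then follow at once from Lemma \ref{nonvanishingsp}.

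For part (i), where $p=r$ is the defining characteristic, I would first identify a Sylow $p$-subgroup $P$ of $G$ with (the fixed points under a Frobenius of) the unipotent radical $U$ of a Borel subgroup. The standard Chevalley commutator relations imply that $U'=[U,U]$ is generated by root subgroups $U_\beta$ with $\beta$ a non-simple positive root, whereas $Z(U)$ always contains the root subgroup corresponding to the highest root (or its appropriate analogue in the twisted case). Thus $Z(P)\not\subseteq P'$, and Lemma \ref{absp} gives the strong Non-Schur element. This argument is exactly the one used in \cite[Proposition 3.1]{FJL}, so I would cite it rather than redo the root-system bookkeeping.

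For part (ii), where $p\ne r$ and $p\nmid |W|$, the key structural input is that a Sylow $p$-subgroup of $G$ is contained in a maximal torus of $G$, and is therefore abelian. This is a classical consequence of the fact that the order $|G|$ factors (up to a power of $r$) as $|W|$ times a product of cyclotomic values in $q$, so that the $p$-part of $|G|$ lives entirely in the torus factor when $p\nmid |W|$. With $P$ abelian, Lemma \ref{absp} immediately yields $S(p)$, so $G$ contains a strong Non-Schur $p$-element. Combining with Lemma \ref{alpha} and Lemma \ref{nonvanishingsp} completes the proof of (ii) and of the final nonvanishing statement.

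The only non-routine obstacle is justifying, in part (i), that the inclusion $Z(U)\not\subseteq [U,U]$ actually holds uniformly across all finite groups of Lie type (untwisted, twisted, and Suzuki/Ree), because the Chevalley commutator relations take slightly different forms in the twisted cases; however since the same statement is needed in \cite{FJL}, one can appeal to their treatment rather than verify case by case.
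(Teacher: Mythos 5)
Your part (ii) and your general reduction (any $p$-element is $\alpha$-regular by Lemma \ref{alpha} since $Z$ is a $p'$-group, so it suffices to produce a strong Non-Schur $p$-element and invoke Lemma \ref{nonvanishingsp}) are correct and match the paper, which likewise cites \cite{BCCISS} and \cite[Lemma 3.1(2)]{FJL} for the abelianness of the Sylow $p$-subgroups and then applies Lemma \ref{absp}.

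Part (i), however, has a genuine gap: the claim that $Z(U)\not\subseteq [U,U]$ for the maximal unipotent subgroup $U$ is false in general, and the inference you draw is in fact backwards. You correctly note that $Z(U)$ contains the highest-root subgroup and that $[U,U]$ is (generically) generated by the root subgroups for non-simple positive roots; but for rank at least $2$ the highest root \emph{is} non-simple, so the highest-root subgroup lies \emph{inside} $[U,U]$ rather than outside it. Concretely, for $G=SL_3(q)$ (or $PSL_3(q)$) in defining characteristic, $Z(U)=[U,U]$ is exactly the root subgroup for $\alpha+\beta$, so $Z(P)\subseteq P'$ and Lemma \ref{absp} simply does not apply; the same containment occurs for $Sp_4(q)$, $SU_3(q)$, the Suzuki groups, etc. This is also not the argument of \cite{FJL}: what is used there (and what the paper cites as \cite[Lemma 3.1(1)]{FJL}, together with \cite{BCCISS}) is the existence in defining characteristic of a $p$-element $x$ (a regular unipotent element) whose centraliser $C_G(x)$ is \emph{abelian} of order divisible by $p$; then $C_G(x)'=\{1\}$ forces $x\notin C_G(x)'$, so $x$ is a strong Non-Schur element, and Lemmas \ref{alpha} and \ref{nonvanishingsp} finish the proof as you intended. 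Replacing your $Z(U)$ versus $[U,U]$ step by this citation repairs part (i).
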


\begin{proof}
This is an immediate consequence of \cite{FJL}: if $p=r$ then by \cite{BCCISS} and \cite[Lemma 3.1(1)]{FJL} $G$ contains a $p$-element $x$ whose centraliser is abelian of order divisible by $p$, and we are done by Lemma \ref{nonvanishingsp}. If $p\neq r$, and $p$ does not divide $|W|$, then by \cite{BCCISS} and \cite[Lemma 3.1(2)]{FJL} the Sylow $p$-subgroups of $G$ are abelian, and we are done by Lemma \ref{absp}.
\end{proof}

\begin{Proposition}\label{suz}
Let $G$ be a Suzuki group $^2B_2(2^{2n+1})$ for some $n\geq 1$. Then for all $\alpha \in Z^2(G;k^\times)$, $G$ contains strong Non-Schur $\alpha$-regular elements, and in particular $HH^1(k_\alpha G)\neq \{0\}$.
\end{Proposition}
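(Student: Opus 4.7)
The plan is to exhibit, for each prime $p$ dividing $|G|$ and each $\alpha \in Z^2(G;k^\times)$, a strong Non-Schur $p$-element of order coprime to $|Z|$; Lemma \ref{alpha} will then give $\alpha$-regularity, and Lemma \ref{nonvanishingsp} will yield $HH^1(k_\alpha G) \neq \{0\}$.

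First I would recall the structure of $G = {}^2B_2(q)$ with $q = 2^{2n+1}$ and $n\geq 1$: the order factors as $|G| = q^2(q-1)(q + \sqrt{2q} + 1)(q - \sqrt{2q} + 1)$, and the Schur multiplier $M(G)$ is $(\Z/2)^2$ when $q = 8$ and trivial for $q > 8$. In particular $M(G)$ is always a $2$-group, so $M(G)_{p'}$ vanishes whenever $p = 2$, and $|Z|$ is coprime to every odd prime dividing $|G|$ (and in fact always divides $4$).

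I would then split on the parity of $p$. If $p = 2$ is the defining characteristic of $G$, Proposition \ref{LieDiv}(i) applied to $G$ produces a strong Non-Schur $\alpha$-regular $2$-element directly, finishing the case. If $p$ is odd, then $p$ divides exactly one of the cyclotomic factors $q-1$, $q + \sqrt{2q} + 1$, or $q - \sqrt{2q} + 1$; since each of these is the order of a cyclic maximal torus of $G$, every Sylow $p$-subgroup of $G$ is cyclic, and in particular abelian. Lemma \ref{absp} then supplies a strong Non-Schur $p$-element $x$, and since $|x|$ is a power of the odd prime $p$ while $|Z|$ divides $4$, Lemma \ref{alpha} shows that $x$ is $\alpha$-regular. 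Applying Lemma \ref{nonvanishingsp} in either case yields the nonvanishing of $HH^1(k_\alpha G)$.

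The main potential obstacle is the $p = 2$ case: one needs a Suzuki $2$-element with abelian centralizer of order divisible by $2$, noting that Lemma \ref{absp} itself is unavailable here because Suzuki $2$-groups $P$ satisfy $Z(P) = P'$. This is exactly what Proposition \ref{LieDiv}(i) delivers via \cite{BCCISS}; a more explicit alternative would invoke the structure of the root subgroups of ${}^2B_2(q)$ to exhibit an involution whose centralizer lies in the Borel subgroup and is abelian, but invoking Proposition \ref{LieDiv}(i) is cleaner and avoids having to reprove a known structural fact.
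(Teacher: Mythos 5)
Your argument is correct, but it follows a slightly different route from the paper for the odd primes. The paper first disposes of all $q=2^{2n+1}>8$ by noting the Schur multiplier is trivial there, and then for $q=8$ it observes that a Weyl group of type $B_2$ has order $2^3$, so that \emph{every} odd prime is handled by Proposition \ref{LieDiv}(ii) and $p=2$ by Proposition \ref{LieDiv}(i); no structure theory of Suzuki groups beyond the Weyl group order is needed. You instead treat all $q$ uniformly: $p=2$ identically via Proposition \ref{LieDiv}(i), and for odd $p$ you use the classical structure of ${}^2B_2(q)$ (the pairwise coprime factors $q-1$, $q\pm\sqrt{2q}+1$ are orders of cyclic maximal tori, hence Sylow $p$-subgroups are cyclic) to feed Lemma \ref{absp} and then Lemma \ref{alpha}. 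Since Proposition \ref{LieDiv}(ii) is itself proved by producing abelian Sylow subgroups and citing Lemma \ref{absp}, the two arguments converge on the same mechanism; the difference is only in how abelian (indeed cyclic) Sylows are certified, the Weyl-group-order criterion of \cite{FJL} versus the explicit torus structure, for which you should supply a reference (e.g.\ Suzuki's original classification of subgroups of $Sz(q)$) since it is asserted without citation. Two minor remarks: your discussion of $M(G)$ and the bound $|Z|\mid 4$ is superfluous, because $Z$ is a cyclic $p'$-group by construction and the ``in particular'' clause of Lemma \ref{alpha} already makes every $p$-element $\alpha$-regular (also, cyclic $Z$ inside $C_2\times C_2$ has order at most $2$); and your observation that Lemma \ref{absp} fails for $p=2$ because $Z(P)=P'$ for Suzuki $2$-groups is a nice sanity check that the paper leaves implicit.
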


\begin{proof}
If $n>1$ then the Schur multiplier of $G$ is trivial and there is nothing to show. If $n=1$ then $G={^2B_2(8)}$ has Schur multiplier isomorphic to the Klein-four group. We may assume by Proposition \ref{LieDiv} that $p\neq 2$ and $p$ divides the order of a Weyl group $W$ of $G$. On the other hand, the Weyl group of a root system of $B_2$ type has order $8$, completing the proof.
\end{proof}

\begin{Proposition}\label{G2}
Let $G$ be a Chevalley group $G_2(q)$ for some prime power $q>2$, or the simple group $G_2(2)'$. Then for all $\alpha \in Z^2(G;k^\times)$ $G$ contains weak Non-Schur $\alpha$-regular elements, and in particular $HH^1(k_\alpha G)\neq \{0\}$. 
\end{Proposition}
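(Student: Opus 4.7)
The plan is a case analysis organized around the Schur multiplier $M(G)$, in the spirit of Lemma \ref{trivschur} and Proposition \ref{suz}. Recall that $M(G_2(q))$ is trivial except when $q=3$, where $M(G_2(3)) \cong C_3$, and $q=4$, where $M(G_2(4)) \cong C_2$; the simple group $G_2(2)'$ also has trivial Schur multiplier.

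First, whenever $M(G)_{p'} = 1$---equivalently whenever $H^2(G;k^\times) = 0$---every $\alpha \in Z^2(G;k^\times)$ is a coboundary of some function $f\colon G \to k^\times$, so for commuting $g,h \in G$ one has $\alpha(g,h) = f(g)f(h)f(gh)^{-1} = \alpha(h,g)$, making every element $\alpha$-regular. Combined with the weak Non-Schur element supplied by \cite{FJL} and Proposition \ref{CD}, this settles $G_2(q)$ for $q \notin \{3,4\}$, the group $G_2(2)'$, and the pairs $(G_2(3),3)$ and $(G_2(4),2)$.

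Second, I apply Proposition \ref{LieDiv}(ii) to the remaining pairs $(G_2(3),p)$ with $p \neq 3$ and $(G_2(4),p)$ with $p \neq 2$: since the Weyl group of type $G_2$ has order $12$, this covers all primes $p \notin \{2,3\}$, namely $p \in \{7,13\}$ for $G_2(3)$ and $p \in \{5,7,13\}$ for $G_2(4)$, yielding strong Non-Schur $\alpha$-regular elements in each case.

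We are thus reduced to the two pairs $(G_2(3),2)$ and $(G_2(4),3)$. In both, $p$ is coprime to $|Z| \in \{3,2\}$, so by Lemma \ref{alpha} every $p$-element is automatically $\alpha$-regular, and it suffices to exhibit a strong Non-Schur $p$-element---equivalently, via Lemma \ref{absp}, to check that $Z(P) \not\subseteq P'$ for a Sylow $p$-subgroup $P$. The main obstacle is this final verification for two specific small groups; it can be carried out either by appealing to the explicit element analysis for exceptional Lie type groups in \cite[\S3]{FJL}, or directly from the known Sylow subgroup structure, or by a \texttt{GAP} computation in the style of Section \ref{sec4}.
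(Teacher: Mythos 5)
Your first two reduction steps are fine and essentially parallel the paper: trivial $p'$-part of the Schur multiplier makes every cocycle a coboundary (hence every element $\alpha$-regular), and Proposition \ref{LieDiv} disposes of the primes not dividing $|W(G_2)|=12$, leaving exactly the pairs $(G_2(3),2)$ and $(G_2(4),3)$. The gap is in your final step, which is where all the actual content of this proposition lies. First, your ``equivalently'' misreads Lemma \ref{absp}: $Z(P)\not\subseteq P'$ is a \emph{sufficient} condition for $S(p)$, not an equivalent one. Worse, the check it proposes actually fails in the remaining cases. For $(G_2(4),3)$ the Sylow $3$-subgroup has order $27$ and sits inside the normaliser of a split maximal torus as $(C_3\times C_3)\rtimes C_3$ with a fixed-point-free order-$3$ action; it is extraspecial of exponent $3$, so $Z(P)=P'$ and Lemma \ref{absp} cannot apply. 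For $(G_2(3),2)$ the Sylow $2$-subgroup lies in the involution centraliser $(SL_2(3)\circ SL_2(3)).2$ and the $2$-central involution is a commutator inside $Q_8\circ Q_8$, so again the criterion is not available. Thus the ``main obstacle'' you defer to FJL, to ``known Sylow structure,'' or to \texttt{GAP} is precisely the step your chosen tool cannot perform, and it is left undone.

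There is a second, related problem: for $(G_2(4),3)$ the paper (following \cite[Proposition 4.1]{FJL}) records only a \emph{weak} Non-Schur element --- an element $x$ of order $15$ with $C_G(x)=\langle x\rangle$ --- and explicitly remarks that only $W(p)$ is being used there; so you cannot simply ``appeal to the explicit element analysis in \cite{FJL}'' to produce the strong Non-Schur $3$-element your reduction demands. The paper's actual device for both residual cases is different and is worth noting: it exhibits (via \texttt{GAP}) an element with \emph{cyclic} self-centraliser (order $8$ in $G_2(3)$ for $p=2$, order $15$ in $G_2(4)$ for $p=3$) and invokes Lemma \ref{cyclic}, which gives $\alpha$-regularity because the restriction of $\alpha$ to a cyclic centraliser is a coboundary; crucially this does not require the element to be a $p$-element, only that its $p$-part witnesses $W(p)$. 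To repair your argument you would either need to adopt that route, or genuinely exhibit a strong Non-Schur $p$-element in each of the two groups (e.g.\ by analysing centralisers of specific classes), rather than reducing to a Sylow-centre criterion that fails here.
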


\begin{proof}
There is an isomorphism $G_2(2)'\cong PSU_3(3^2)$ and so we are done in this case by Proposition \ref{S(p)}. Now suppose $G=G_2(q)$, $q>2$. A Weyl group of $G$ has order $2^2\cdot 3$ \cite[Lemma 3.1]{FJL} so by Lemma \ref{LieDiv} we only need consider $p=2,3$. The Schur multiplier of $G$ is trivial except when $q=3$ or $4$, in which case it is of order $3$ or $2$ respectively.

First let $q=3$. By Proposition \ref{LieDiv} we have $p=2$ (otherwise we are done). Let $\alpha \in Z^2(G;k^\times)$. One verifies in \texttt{GAP} that $G$ has an element $x$ of order $8$, with centraliser $C_G(x)=\langle x \rangle$, and we are done by Lemma \ref{cyclic}: $G$ satisfies $S(p)$.

Now let $q=4$, so that $p=3$. In this case, using \texttt{GAP} one can find an element $x$ of order $15$ with $C_G(x)=\langle x \rangle$, and we are again done by Lemma \ref{cyclic}, though $G$ only satisfies $W(p)$ in this case.
\end{proof}

\noindent Note that these calculations agree with the more general calculations done by Fleischmann, \\ Janiszczak and Lempken in \cite[Proposition 4.1]{FJL}.

\begin{Proposition}\label{F4}
Let $G$ be a Chevalley group $F_4(q)$ for some prime power $q$. Then for all $\alpha\in Z^2(G;k^\times)$, $G$ contains weak Non-Schur $\alpha$-regular elements, and in particular $HH^1(k_\alpha G)\neq \{0\}$.
\end{Proposition}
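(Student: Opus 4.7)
The plan is to follow exactly the template set by Propositions \ref{suz} and \ref{G2}: first reduce the allowed characteristic using Proposition \ref{LieDiv}, then dispose of the generic case via triviality of the Schur multiplier, and finally handle any small residual exceptional case with a direct \texttt{GAP} computation as in Proposition \ref{G2}. The Weyl group of type $F_4$ has order $|W(F_4)| = 1152 = 2^7 \cdot 3^2$. Proposition \ref{LieDiv}(i) disposes of the case $p = r$ (the defining characteristic), and Proposition \ref{LieDiv}(ii) disposes of every $p \neq r$ with $p \notin \{2,3\}$. Hence from the outset we may assume $p \in \{2,3\}$ with $p \neq r$.

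Next, I would invoke the known classification of Schur multipliers of exceptional Chevalley groups: $M(F_4(q))$ is trivial for $q > 2$, so that $H^2(G;k^\times) = 0$ in these cases. For such $q$, every $\alpha \in Z^2(G;k^\times)$ is cohomologous to the trivial cocycle, so $k_\alpha G \cong kG$, and the existence of a weak Non-Schur element in $F_4(q)$ provided by \cite{FJL} together with Proposition \ref{nonvanishingkg} yields $HH^1(k_\alpha G) \neq \{0\}$. This leaves only the small residual case $G = F_4(2)$, where we must take $p = 3$ since $p = 2 = r$ is already handled by Proposition \ref{LieDiv}(i).

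For the residual case, I would mirror the computation used in Proposition \ref{G2} for $G_2(3)$ and $G_2(4)$: using the character table of $F_4(2)$ from the \texttt{GAP} character-table library, I would search the conjugacy classes of $F_4(2)$ for a class of elements $x$ with $|x|$ divisible by $3$ and with centraliser order equal to $|x|$. Any such $x$ has cyclic centraliser $C_G(x) = \langle x \rangle$, so Lemma \ref{cyclic} yields $\alpha$-regularity for every $\alpha \in Z^2(G;k^\times)$, and the weak Non-Schur condition on $x$ is automatic because the nontrivial $3$-part of $x$ cannot lie in the trivial commutator subgroup $C_G(x)' = \{1\}$. The conclusion $HH^1(k_\alpha G) \neq \{0\}$ then follows from Proposition \ref{CD}.

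The main obstacle is verifying by character table inspection that such an element genuinely exists in $F_4(2)$; since $|F_4(2)| \approx 3.3 \cdot 10^{15}$ one cannot compute in the group directly, but the centraliser orders read off from class sizes in the (manageable-sized) character table suffice. Elements from a Coxeter-type or otherwise regular maximal torus, whose order is divisible by $3$ and whose centraliser is exactly that maximal torus, are the natural candidates; this is morally the same strategy (cyclic centraliser + Lemma \ref{cyclic}) that worked in the $G_2$ case, and I expect no conceptual difficulty beyond locating a specific class to cite.
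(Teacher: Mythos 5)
Your proposal is correct and takes essentially the same route as the paper: reduce via Proposition \ref{LieDiv} and the triviality of the Schur multiplier of $F_4(q)$ for $q>2$ to the single case $G=F_4(2)$, $p=3$, and then apply Lemma \ref{cyclic} to an element with cyclic centraliser. The class you expect to locate does exist; the paper obtains it from \cite[Lemma 3.1]{FJL}, which gives a weak Non-Schur element with $|C_G(x)|=21$, and verifies in \texttt{GAP} that $C_G(x)\cong C_{21}$.
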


\begin{proof}
The Schur multiplier of $G$ is trivial in all cases except $q=2$. A Weyl group of $F_4$ type has order $2^7\cdot3$, so by Lemma \ref{LieDiv} we only need to check $G=F_4(2)$ and $p=3$. By \cite[Lemma 3.1]{FJL} $C_G(x)$ has order $21$ and in fact one verifies (for example in \texttt{GAP}) that $C_G(x)\cong C_{21}$. By Lemma \ref{cyclic} the action of $C_G(x)$ on $\kax$ is trivial. This completes the proof.
\end{proof}

\begin{Proposition}\label{e67}
Let $q$ be a nontrivial prime power and $G$ be one of the exceptional Chevalley groups $E_6(q)$, ${^2}E_6(q^2)$ or $E_7(q)$. Then for all $\alpha \in Z^2(G;k^\times)$, $G$ contains weak Non-Schur $\alpha$-regular elements, and in particular $HH^1(k_\alpha G)\neq \{0\}$.
\end{Proposition}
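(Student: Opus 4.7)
The plan is to follow the template established in Propositions \ref{G2} and \ref{F4}: apply Proposition \ref{LieDiv} to reduce to the case where $p\neq r$ and $p\mid|W|$, and then exhibit, for each surviving prime, a weak Non-Schur element whose centraliser is tame enough that $\alpha$-regularity is automatic.

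First I would record the relevant arithmetic. For $E_6(q)$ and ${}^2E_6(q^2)$ we have $|W|=2^7\cdot 3^4\cdot 5$, so the primes to consider are $p\in\{2,3,5\}$; for $E_7(q)$ we have $|W|=2^{10}\cdot 3^4\cdot 5\cdot 7$, leaving $p\in\{2,3,5,7\}$. The Schur multipliers of these groups are cyclic of order $\gcd(3,q-1)$, $\gcd(3,q+1)$, and $\gcd(2,q-1)$ respectively (modulo the small exceptions $E_6(2)$, ${}^2E_6(2^2)$, which must be handled separately in \texttt{GAP}), so the central $p'$-subgroup $Z$ arising from any representative cocycle $\alpha$ has order dividing $3$ or $2$. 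When this Schur multiplier is trivial the statement is immediate from Proposition \ref{nonvanishingkg}, so we assume $|Z|\in\{2,3\}$.

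Next I would split into two subcases depending on whether $p$ divides $|Z|$. When $\gcd(p,|Z|)=1$---that is, for $p\in\{2,5\}$ on the $E_6$-type groups and $p\in\{3,5,7\}$ on $E_7(q)$---any $p$-element is automatically $\alpha$-regular by Lemma \ref{alpha}. It then suffices to take a strong Non-Schur $p$-element, which is provided by the abelian Sylow information in \cite[Lemma 3.1]{FJL} combined with Lemma \ref{absp}; the conclusion follows by Proposition \ref{CD} and Proposition \ref{nonvanishingkg}.

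The main obstacle lies in the coincident cases $p=3$ for $E_6(q)$ with $3\mid q-1$, for ${}^2E_6(q^2)$ with $3\mid q+1$, and $p=2$ for $E_7(q)$ with $q$ odd, where Lemma \ref{alpha} no longer applies. For these I would invoke \cite[Proposition 4.1]{FJL}, which produces in each such group a weak Non-Schur element $x$ whose centraliser $C_G(x)$ is a cyclic maximal torus (the torus in question being governed by an appropriate Zsigmondy prime divisor of $q^n\pm 1$ chosen so that $p$ still divides the order of $x$). Since $C_G(x)$ is cyclic, Lemma \ref{cyclic} immediately forces $x$ to be $\alpha$-regular for every $\alpha\in Z^2(G;k^\times)$, and the nonvanishing $HH^1(k_\alpha G)\neq\{0\}$ then follows from Proposition \ref{CD} combined with Proposition \ref{nonvanishingkg}. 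The delicate point---and the only nonformal input---is verifying that in these three coincident pairs $(G,p)$ the weak Non-Schur witness of \cite{FJL} can indeed be chosen with cyclic centraliser; this is precisely the content of the torus computations in \cite[Proposition 4.1]{FJL}.
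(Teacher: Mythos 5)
Your reduction via Proposition \ref{LieDiv} and the Weyl group orders is the right start, but the case division that follows is misaligned with where the difficulty actually sits, and the key step fails. First, the split on whether $p$ divides $|Z|$ is vacuous: since $\mathrm{char}\,k=p$, the group $k^\times$ has no $p$-torsion and $H^2(G;k^\times)\cong M(G)_{p'}$, so $Z$ is \emph{always} a $p'$-group. In your ``coincident'' cases ($p=3$ for $E_6(q)$ with $3\mid q-1$ or ${}^2E_6(q^2)$ with $3\mid q+1$, $p=2$ for $E_7(q)$ with $q$ odd) the $p'$-part of the multiplier is trivial, so $H^2(G;k^\times)=1$ and there is nothing to prove; the elaborate appeal to cyclic maximal tori is unnecessary there (and the assertion that the witnesses of \cite[Proposition 4.1]{FJL} have \emph{cyclic} centraliser is stronger than what that result gives -- the relevant centralisers are abelian maximal tori, generally of rank bigger than one).

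Second, and this is the genuine gap, the cases you dismiss as easy are exactly the hard ones: $p\in\{2,5\}$ (resp.\ $p\in\{3,5,7\}$) with $Z$ of order $3$ (resp.\ $2$) nontrivial. There you claim a strong Non-Schur $p$-element exists ``by the abelian Sylow information in \cite[Lemma 3.1]{FJL} combined with Lemma \ref{absp}'', but that lemma yields abelian Sylow $p$-subgroups only when $p\nmid|W|$, which contradicts your standing reduction; for the surviving primes the Sylow $p$-subgroups of these groups are typically non-abelian (e.g.\ $p=2$ for $E_6(q)$ with $q$ odd, or $p=3$ for $E_7(q)$), and \cite{FJL} only guarantees \emph{weak} Non-Schur elements here, whose order need not be coprime to $|Z|$, so Lemma \ref{alpha} does not apply to them either. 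The paper closes precisely this gap with Proposition \ref{chgab}: by \cite[Proposition 4.1]{FJL} one can choose a weak Non-Schur element $x$ whose centraliser $C_{\hG}(\hx)$ in the covering group is abelian (a maximal torus), and since $Z$ is cyclic of prime order ($2$ or $3$) Proposition \ref{chgab} forces $x$ to be $\alpha$-regular, after which Propositions \ref{nonvanishingkg} and \ref{CD} give the nonvanishing. Your proposal never invokes Proposition \ref{chgab} or any substitute for it, so as written it does not prove the statement.
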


\begin{proof}
This follows from Proposition \ref{chgab}. Let $G$ be as in the statement of the proposition and $\hG$ the central extension of $G$ by $Z$ representing $\alpha$. Then $Z$ is cyclic of order $\gcd(q-1,3)$, $\gcd(q+1,3)$ and $\gcd(q-1,2)$ for $E_6(q)$, ${^2}E_6(q^2)$ and $E_7(q)$ respectively.

By \cite[Proposition 4.1]{FJL} we have that the necessary conditions of Proposition \ref{chgab} are satisfied for $G$, completing the proof.
\end{proof}

\begin{proof}[Proof of Theorem \ref{main}]
The proof now follows from Theorem \ref{abelian} and Propositions \ref{S(p)},  \ref{suz}, \ref{G2}, \ref{F4}, \ref{e67} and Lemma \ref{trivschur}.
\end{proof}

\begin{Remark}
It is relatively straightforward to verify that in fact, every finite simple group satisfies either $S(p)$ or contains an element $x$ satisfying the hypothesis of Proposition \ref{chgab}, providing an alternative proof of Theorem \ref{main}.
\end{Remark}

\begin{proof}[Proof of Corollary \ref{mainnonvanishing}]
By Theorem \ref{main}, $G$ contains weak Non-Schur $\alpha$-regular elements. On the one hand such an element $x$ gives $\Hom(C_G(x),k)\neq\{0\}$ by Proposition \ref{nonvanishingkg}, and on the other $\Hom(C_G(x),k)\cong H^1(C_G(x);k_\alpha\hx)$ since $x$ is $\alpha$-regular. The result now follows from the twisted centraliser decomposition, Proposition \ref{CD}.
\end{proof}

\section{Dimensions}\label{sec4}

Using the isomorphism given by (\ref{(2)}) and the comments that follow it, the centraliser decomposition, and the \texttt{GAP} code in \cite[Appendix A]{MurMat} we are able to calculate the dimensions of the first Hochschild cohomology of some finite simple group algebras. Since the cover $\hG$ is an extension of $G$ by a relatively small central subgroup $Z$ in each case below, finding the dimensions of $HH^1(k_\alpha G)$ is a case of elementary arithmetic.

More explicitly, we have that for the case where $Z=C_2$, $k\hG$ is isomorphic as a $k$-algebra to $kG\times k_\alpha G$. Taking Hochschild cohomology, one obtains $HH^1(k\hG)\cong HH^1(kG)\oplus HH^1(k_\alpha G)$  which may then be used to show that $\dim_k(HH^1(k_\alpha G)=\dim_k(HH^1(k\hG))-\dim_k(HH^1(kG))$. When $Z=C_3$, we have $kG\times k_\alpha G\times k_{\alpha^{-1}}G$ so that $HH^1(kG)\oplus HH^1(k_\alpha G)^{\oplus 2}$ by our earlier remarks, whence $\dim_k(HH^1(k_\alpha G)=(\dim_k(HH^1(k\hG))-\dim_k(HH^1(kG)))/2$.

When $Z=C_4$, $C_6$ or in one case $C_{12}$ we have to be a little more careful.  In some of these cases, one finds that by Corollary \ref{mainnonvanishing} the only option is that all the dimensions of the $HH^1(k_{\alpha^i}G)$, $i=1,\ldots,|Z|-1$ are all equal to $1$; take, for example, the case where $G=A_7$, $Z=C_6$ and $p=5$. Here, $\dim_k(HH^1(k\hG))=6$, $\dim_k(HH^1(kG))=1$ so that $\sum_{i=1}^5\dim_k(HH^1(k_{\alpha^i}G))=5$. By our main corollary, the nonvanishing of $HH^1(k_\alpha G)$, one therefore obtains $\dim_k(HH^1(k_{\alpha^i}G))=1$ for $i=0,\ldots,5$. On the other hand, in this same example with instead the prime $p=7$, we have that $\alpha^2$ and $\alpha^{-2}$ correspond exactly to the entry of the table for which $Z=C_3$, and $\alpha^3$ corresponds to the earlier entry for which $Z=C_2$. In other words, subtracting the dimensions of $\dim_k(HH^1(k_\alpha G))$ for the entries $Z=C_3$ and $Z=C_2$ from the dimension of $HH^1(k\hG)$ we conclude that the dimensions of $HH^1(k_{\alpha^i}G)$, $i=0,\ldots,5$ are equal to $2$. 

In fact, the only occurrence in our table for which these dimensions are not all equal for all powers of $\alpha$ is when $G$ is the Mathieu group $M_{22}$, $Z=C_4$ and $p=3$. In this case, one sees (using the same reasoning as above) that $\dim_k(HH^1(k_{\alpha^i}G)$ is equal to $3,2,3,2$ for $i=0,1,2,3$ respectively.

Our list was chosen by working our way through the groups whose character tables and further details are given in The ATLAS \cite{ATLAS}, discarding those groups which \texttt{GAP} could not construct nor perform the necessary calculations on. The group notation used is as in The ATLAS; all other notation follows as previously.

\begin{Proposition}\label{dimsprop}
Let $G$ be one of the groups given in the first column of Table \ref{dims}, and $\hG$ be a central extension of $G$ by a cyclic $p'$-group $Z$. Then for all $\alpha \in Z^2(G;k^\times)$ corresponding to a faithful character $Z\to k^\times$, the dimensions of the first Hochschild cohomology of $k_\alpha G$ are given as in the final column of Table \ref{dims}.
\end{Proposition}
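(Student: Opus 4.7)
The plan is to exploit the decomposition $k\hG \cong \prod_{i=0}^{m-1} k_{\alpha^i}G$ from (\ref{dirprod}) together with the centraliser decomposition (Proposition \ref{CD}) to reduce the computation of each $\dim_k HH^1(k_\alpha G)$ to computations of $\dim_k HH^1$ of ordinary (untwisted) group algebras of the cover $\hG$ and of $G$, both of which are accessible through \texttt{GAP} using the code referenced from \cite[Appendix A]{MurMat}. The isomorphism (\ref{dirsum}) rewrites $\dim_k HH^1(k\hG)$ as $\dim_k HH^1(kG) + \sum_{i=1}^{m-1}\dim_k HH^1(k_{\alpha^i}G)$, and the symmetry $\dim_k HH^1(k_\alpha G) = \dim_k HH^1(k_{\alpha^{-1}}G)$ (coming from $k_{\alpha^{-1}}G \cong (k_\alpha G)^{\op}$) gives enough information in the smallest cases to solve for $\dim_k HH^1(k_\alpha G)$ uniquely.

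First I would handle $Z=C_2$, where (\ref{dimsum}) immediately yields
\[
\dim_k HH^1(k_\alpha G) = \dim_k HH^1(k\hG) - \dim_k HH^1(kG).
\]
Next, for $Z=C_3$, the two nontrivial powers of $\alpha$ contribute equal dimensions by the $\op$-symmetry, so
\[
\dim_k HH^1(k_\alpha G) = \tfrac12\bigl(\dim_k HH^1(k\hG) - \dim_k HH^1(kG)\bigr).
\]
These two cases fill in all rows of the table where $|Z|\in\{2,3\}$ by direct \texttt{GAP} computation of the two right-hand dimensions via the centraliser decomposition.

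For $Z=C_4$, $C_6$ and $C_{12}$ a single equation in several unknowns is insufficient, so the strategy is to bootstrap using Corollary \ref{mainnonvanishing}, which forces $\dim_k HH^1(k_{\alpha^i}G)\geq 1$ for every $i$ with $\alpha^i$ nontrivial, together with entries already established for subgroups of $Z$. For example, when $G=A_7$ and $Z=C_6$ with $p=5$, the computation gives $\sum_{i=1}^{5}\dim_k HH^1(k_{\alpha^i}G)=5$; since each summand is at least $1$ by Corollary \ref{mainnonvanishing}, every summand must equal $1$. For $p=7$ the same group $G=A_7$ with $Z=C_6$ requires a finer split: the powers $\alpha^2,\alpha^{-2}$ come from the $C_3$ row and $\alpha^3$ comes from the $C_2$ row, so subtracting those already-computed contributions from $\dim_k HH^1(k\hG)-\dim_k HH^1(kG)$ and halving isolates the entries $\dim_k HH^1(k_{\alpha^i}G)=2$ for $i=1,5$.

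The subtlest case, and the one I expect to be the main obstacle, is $G=M_{22}$ with $Z=C_4$ and $p=3$, where the cover is not merely an extension by a cyclic $p'$-subgroup of small order giving a symmetric table entry. Here the symmetry forces $\dim_k HH^1(k_\alpha G) = \dim_k HH^1(k_{\alpha^{-1}}G) = \dim_k HH^1(k_{\alpha^3}G)$, while $\alpha^2$ corresponds to the $C_2$ entry and can be read off from the earlier row. One solves the resulting linear relation to obtain the asymmetric dimensions $3,2,3,2$ for $i=0,1,2,3$ as indicated in the discussion preceding the proposition. Once this bookkeeping is carried out row by row in accordance with Table \ref{dims}, and the \texttt{GAP} values of $\dim_k HH^1(k\hG)$ and $\dim_k HH^1(kG)$ are substituted, the final column of the table is determined entirely, completing the proof.
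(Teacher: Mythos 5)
Your proposal is correct and follows essentially the same route as the paper: \texttt{GAP} computation of $\dim_k HH^1(k\hG)$ and $\dim_k HH^1(kG)$ via the centraliser decomposition, the product decomposition (\ref{dirprod})/(\ref{dirsum}), the symmetry $\dim_k HH^1(k_\alpha G)=\dim_k HH^1(k_{\alpha^{-1}}G)$, and, for $Z=C_4$, $C_6$, $C_{12}$, bootstrapping from Corollary \ref{mainnonvanishing} and the rows already computed for smaller $Z$ (including the $M_{22}$, $Z=C_4$, $p=3$ case yielding $3,2,3,2$). No substantive differences from the paper's argument.
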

\newpage
\begin{center}
\begin{longtable}{c|c|c|c|c|c|c}
\caption{ Dimensions of the first Hochschild cohomology of some finite simple group algebras   }\label{dims} \\
		$G$	    	& $M(G)$	& $Z$     & $p$	    & $\dim_k(HH^1(k\hG))$ & $\dim_k(HH^1(kG))$ & $\dim_k(HH^1(k_\alpha G))$ \\
		\hline
		\hline
		$A_5$       & $C_2$     & $C_2$   & $3$     & $2$   & $1$   & $1$   \\
		            &           &         & $5$     & $4$   & $2$   & $2$   \\
		$L_3(2)$    & $C_2$     & $C_2$   & $3$     & $2$   & $1$   & $1$   \\
		            &           &         & $7$     & $4$   & $2$   & $2$   \\
		$A_6$       & $C_6$     & $C_2$   & $3$     & $8$   & $4$   & $4$   \\	
		            &           &         & $5$     & $4$   & $2$   & $2$   \\
		            &           & $C_3$   & $2$     & $9$   & $3$   & $3$   \\
		            &           &         & $5$     & $6$   & $2$   & $2$   \\
		            &           & $C_6$   & $5$     & $12$  & $2$   & $2$   \\		            
		$L_2(11)$   & $C_2$     & $C_2$   & $3$     & $5$   & $2$   & $3$   \\
		            &           &         & $5$     & $4$   & $2$   & $2$   \\		
		            &           &         & $7$     & $4$   & $2$   & $2$   \\
		$L_2(13)$   & $C_2$     & $C_2$   & $3$     & $5$   & $2$   & $3$   \\            
		            &           &         & $7$     & $6$   & $3$   & $3$   \\
		            &           &         & $13$    & $4$   & $2$   & $2$   \\		      
		$L_2(17)$   & $C_2$     & $C_2$   & $3$     & $8$   & $4$   & $4$   \\
		            &           &         & $17$    & $4$   & $2$   & $2$   \\ 
		$A_7$       & $C_6$     & $C_2$   & $3$     & $9$   & $5$   & $4$   \\	
		            &           &         & $5$     & $2$   & $1$   & $1$   \\
		            &           &         & $7$     & $4$   & $2$   & $2$   \\		            
		            &           & $C_3$   & $2$     & $17$  & $5$   & $6$   \\
		            &           &         & $5$     & $3$   & $1$   & $1$   \\
		            &           &         & $7$     & $6$   & $2$   & $2$   \\		            
		            &           & $C_6$   & $5$     & $6$   & $1$   & $1$   \\
		            &           &         & $7$     & $12$  & $2$   & $2$  \\		            
		$L_2(19)$   & $C_2$     & $C_2$   & $3$     & $8$   & $4$   & $4$   \\
		            &           &         & $5$     & $9$   & $4$   & $5$   \\
		            &           &         & $19$    & $4$   & $2$   & $2$   \\ 

		$L_2(23)$   & $C_2$     & $C_2$   & $3$     & $11$  & $5$   & $6$   \\
		            &           &         & $11$    & $10$  & $5$   & $5$   \\
		            &           &         & $23$    & $4$   & $2$   & $2$   \\ 		    
		$L_2(27)$   & $C_2$     & $C_2$   & $3$     & $12$  & $6$   & $6$   \\
		            &           &         & $7$     & $13$  & $6$   & $7$   \\
		            &           &         & $13$    & $12$  & $6$   & $6$   \\ 		
		$L_2(29)$   & $C_2$     & $C_2$   & $3$     & $14$  & $7$   & $7$   \\
		            &           &         & $5$     & $14$  & $7$   & $7$   \\
		            &           &         & $7$     & $13$  & $6$   & $7$   \\
		            &           &         & $29$    & $4$   & $2$   & $2$   \\ 	
		$L_2(31)$   & $C_2$     & $C_2$   & $3$     & $14$  & $7$   & $7$   \\
		            &           &         & $5$     & $14$  & $7$   & $7$   \\
		            &           &         & $31$    & $4$   & $2$   & $2$   \\
		            		             
		            		            \pagebreak
		$G$	    	& $M(G)$	& $Z$     & $p$	    & $\dim_k(HH^1(k\hG))$ & $\dim_k(HH^1(kG))$ & $\dim_k(HH^1(k_\alpha G))$ \\
		\hline
		\hline
		$A_8$       & $C_2$     & $C_2$   & $3$     & $14$  & $7$   & $7$   \\
		            &           &         & $5$     & $6$   & $3$   & $3$   \\
		            &           &         & $7$     & $4$   & $2$   & $2$   \\
		$L_3(4)$    & $C_4^2\times C_3$   & $C_2$   & $3$   & $4$   & $2$   & $2$    \\
		            &           &         & $5$     & $4$   & $2$   & $2$   \\
		            &           &         & $7$     & $4$   & $2$   & $2$   \\
		            &           & $C_3$   & $2$     & $30$  & $10$ & $10$   \\
		            &           &         & $5$     & $6$   & $2$   & $2$   \\	
		            &           &         & $7$     & $6$   & $2$   & $2$   \\
		            &           & $C_6$   & $5$     & $12$  & $2$   & $2$   \\
		            &           &         & $7$     & $12$  & $2$   & $2$   \\			            
		$U_4(2)$    & $C_2$     & $C_2$   & $3$     & $39$  & $20$  & $19$  \\
		            &           &         & $5$     & $2$   & $1$   & $1$   \\
		$Sz(8)$     & $C_2^2$   & $C_2$   & $5$     & $2$   & $1$   & $1$   \\
		            &           &         & $7$     & $6$   & $3$   & $3$   \\
		            &           &         & $13$    & $6$   & $3$   & $3$   \\
        $M_{12}$    & $C_2$     & $C_2$   & $3$     & $7$   & $4$   & $3$   \\
		            &           &         & $5$     & $4$   & $2$   & $2$   \\
		            &           &         & $11$    & $4$   & $2$   & $2$   \\
		$A_9$       & $C_2$     & $C_2$   & $3$     & $25$  & $12$  & $13$  \\
		            &           &         & $5$     & $7$   & $4$   & $3$   \\
		            &           &         & $7$     & $2$   & $1$   & $1$   \\
		$J_2$       & $C_2$     & $C_2$   & $3$     & $13$  & $7$   & $6$   \\
		            &           &         & $5$     & $18$  & $10$  & $8$   \\
		            &           &         & $7$     & $2$   & $1$   & $1$   \\ 
		$S_6(2)$    & $C_2$     & $C_2$   & $3$     & $28$  & $16$  & $12$  \\
		            &           &         & $5$     & $6$   & $3$   & $3$   \\
		            &           &         & $7$     & $2$   & $1$   & $1$   \\
		$A_{10}$    & $C_2$     & $C_2$   & $3$     & $29$  & $15$  & $14$  \\
		            &           &         & $5$     & $9$   & $5$   & $4$   \\
		            &           &         & $7$     & $6$   & $3$   & $3$   \\
		$L_3(7)$    & $C_3$     & $C_3$   & $2$     & $38$  & $12$  & $13$  \\
		            &           &         & $7$     & $21$  & $7$   & $7$   \\
		            &           &         & $19$    & $18$  & $6$   & $6$   \\
        $G_2(3)$    & $C_3$     & $C_3$   & $2$     & $33$  & $11$  & $11$   \\
		            &           &         & $7$     & $3$   & $1$   & $1$   \\
		            &           &         & $13$    & $6$   & $2$   & $2$   \\
		$S_4(5)$    & $C_2$     & $C_2$   & $3$     & $24$  & $12$  & $12$  \\
		            &           &         & $5$     & $39$  & $20$  & $19$   \\
		            &           &         & $13$    & $6$   & $3$   & $3$   \\		            
		            		            
		            \pagebreak
		$G$	    	& $M(G)$	& $Z$     & $p$	    & $\dim_k(HH^1(k\hG))$ & $\dim_k(HH^1(kG))$ & $\dim_k(HH^1(k_\alpha G))$ \\
		\hline
		\hline		            
        $M_{22}$    & $C_{12}$  & $C_2$   & $3$     & $6$   & $3$   & $3$   \\
		            &           &         & $5$     & $2$   & $1$   & $1$    \\
		            &           &         & $7$     & $4$   & $2$   & $2$    \\
		            &           &         & $11$    & $4$   & $2$   & $2$   \\	
		            &           & $C_3$   & $2$     & $29$  & $9$   & $10$   \\	
		            &           &         & $5$     & $3$   & $1$   & $1$   \\
		            &           &         & $7$     & $6$   & $2$   & $2$   \\		
		            &           &         & $11$    & $6$   & $2$   & $2$   \\		       
		            &           & $C_4$   & $3$     & $10$  & $3$   & $2$   \\	
		            &           &         & $5$     & $4$   & $1$   & $1$   \\
		            &           &         & $7$     & $8$   & $2$   & $2$   \\
		            &           &         & $11$    & $8$   & $2$   & $2$   \\	
		            &           & $C_6$   & $5$     & $6$   & $1$   & $1$   \\
		            &           &         & $7$     & $12$  & $2$   & $2$   \\
		            &           &         & $11$    & $12$  & $2$   & $2$   \\	
		            &           & $C_{12}$ & $5$     & $12$ & $1$   & $1$   \\
		            &           &         & $7$     & $24$  & $2$   & $2$   \\
		            &           &         & $11$     & $24$ & $2$   & $2$   \\

		$U_3(8)$    & $C_3$     & $C_3$   & $2$     & $45$  & $15$  & $15$  \\
		            &           &         & $7$     & $27$  & $9$   & $9$   \\
		            &           &         & $19$    & $18$  & $6$   & $6$   \\
		$A_{11}$    & $C_2$     & $C_2$   & $3$     & $37$  & $20$  & $17$  \\
		            &           &         & $5$     & $13$  & $7$   & $6$   \\
		            &           &         & $7$     & $7$   & $4$   & $3$   \\	          
		            &           &         & $11$    & $4$   & $2$   & $2$   \\
		$HS$        & $C_2$     & $C_2$   & $3$     & $10$  & $5$   & $5$  \\
		            &           &         & $5$     & $15$  & $8$   & $7$   \\
		            &           &         & $7$     & $2$   & $1$   & $1$   \\	 
		            &           &         & $11$    & $4$   & $2$   & $2$   \\
		$A_{12}$    & $C_2$     & $C_2$   & $3$     & $37$  & $29$  & $17$  \\
		            &           &         & $5$     & $13$  & $10$  & $6$   \\
		            &           &         & $7$     & $7$   & $5$   & $3$   \\	          
		            &           &         & $11$    & $4$   & $2$   & $2$   \\		            
		$G_2(4)$    & $C_2$     & $C_2$   & $3$     & $24$  & $14$  & $10$  \\
		            &           &         & $5$     & $22$  & $12$  & $10$   \\
		            &           &         & $7$     & $6$   & $3$   & $3$   \\	          
		            &           &         & $13$    & $4$   & $2$   & $2$   \\		            
		          
\end{longtable}
\end{center}


\begin{thebibliography}{WWW}




\bibitem{BensonII} D. J. Benson, {\em Representations and cohomology, Vol. II: Cohomology of groups and modules.}  Cambridge studies in advanced mathematics, {\bf 31}, Cambridge University Press (1991).

\bibitem{BenKesLinBV} D. J. Benson, R. Kessar and M. Linckelmann, {\em On the BV structure of the Hochschild cohomology of finite group algebras}, Pac. J. Math., \textbf{313} (2021), 1--44.

\bibitem{BCCISS} A. Borel, R. Carter, C. W. Curtis, N. Iwahori, T. A. Springer, R. Steinberg, {\em Seminar on Algebraic Groups and Related Finite Groups}, Lecture Notes in Mathematics \textbf{131}, Springer-Verlag, Berlin-Heidelberg (1970).




\bibitem{BriRub} B. Briggs and L. Rubio y Degrassi, {\em Stable invariance of the restricted Lie algebra structure of Hochschild cohomology}, preprint (2020).

\bibitem{ChapSchSol} C. Chaparro, S. Schroll, A. Solotar, {\em On the Lie algebra structure of the first Hochschild cohomology of gentle
algebras and Brauer graph algebras}, J. Algebra \textbf{558}, (2020), 293--326.

\bibitem{ATLAS} J. Conway, R. Curtis, S. Norton, R. Parker, and R. Wilson, \textit{Atlas of finite groups}, Oxford University Press, Eynsham (1985).


\bibitem{EisRad} F. Eisele and T. Raedschelders, {\em On solvability of the first Hochschild cohomology of a finite-dimensional algebra}, Trans. Amer. Math. Soc. \textbf{373} (2020), 7607--7638.

\bibitem{Evens} L. Evens, {\em The Cohomology of Groups}, Oxford Mathematical Monographs \textbf{9}, (1991).


\bibitem{FJL} P. Fleischmann, I. Janiszczak, W. Lempken, {\em Finite groups have local non-Schur centralizers}. Manuscripta Mathematica {\bf 80} (1993), 213--224.

\bibitem{GAP} The \texttt{GAP} Group, {\em GAP - Groups, Algorithms, and Programming, Version 4.11.1}, \url{https://www.gap-system.org}  (2021).


\bibitem{Humph} J. F. Humphreys, {\em Projective modular representations of finite groups}, J. Lon. Math Soc., \textbf{(2)}, 16, (1977), 51--66.

\bibitem{Kar} G. Karpilovsky, {\em Group Representations: Volume 2}, North-Holland mathematics studies, {\bf 177} (1993).

\bibitem{LinBlockI} M. Linckelmann, {\em The Block Theory of Finite Group Algebras, Volume 1}, LMS Student Society Texts \textbf{91}, (2018).

\bibitem{LinRubI} M. Linckelmann and L. Rubio y Degrassi, {\em Block algebras with $HH^1$ a simple Lie algebra}, Q. J. Math. {\bf 69} (4) (2018), 1123--1128.

\bibitem{LinRubII} M. Linckelmann and L. Rubio y Degrassi, {\em On the Lie algebra structure of $HH^1(A)$ of a finite-dimensional algebra $A$}, Proc. Amer. Math. Soc. \textbf{148(5)} (2020).

\bibitem{MurMat} W. Murphy, {\em The Lie algebra structure of the first Hochschild cohomology of the blocks of the sporadic Mathieu groups}, J. Group Theory (to appear), \url{arXiv:2110.02941} (2021).

\bibitem{RubSchroSol} L. Rubio y Degrassi, S. Schroll, A. Solotar, {\em The first Hochschild cohomology as a Lie algebra}, preprint, (2020).

\bibitem{Tod} C-C. Todea, {\em Nontriviality of the first Hochschild cohomology of some block algebras of finite groups}, Preprint, \url{arXiv:2206.09108} (2022).


\bibitem{Weibel} C. Weibel, {\em An Introduction to Homological Algebra}, Cambridge Studies in Advanced Mathematics {\bf 38},  Cambridge University Press (1994).

\bibitem{WithSpoon} S. J. Witherspoon, {\em Products in Hochschild cohomology and Grothendieck rings of group crossed products}, Adv. in Math. \textbf{185} (2004), 136--158.


\end{thebibliography}
\end{document}